\documentclass{amsart}

\usepackage{amsthm}
\usepackage{amssymb}

\usepackage{graphicx}
\usepackage{psfrag}

 \theoremstyle{plain}
 \newtheorem{theorem}{Theorem}[section]
 \newtheorem{cor}[theorem]{Corollary}
 \newtheorem{lemma}[theorem]{Lemma}

 \newcommand{\cal}{\mathcal}

\newcommand{\imp}{\rightarrow}

\newcommand{\cp}{{\cal P}}

\newcommand{\cs}{{\cal S}}
\newcommand{\co}{{\cal O}}
\newcommand{\cd}{{\cal D}}

\newcommand{\ci}{{\cal I}}
\newcommand{\cw}{{\cal W}}

\title{On Generalized Total Colourings of Planar Graphs}

\author[Ph. Cara]{Philippe Cara}%
\address{Department of Mathematics and Data Science\\
  Vrije Universiteit Brussel\\
  1050 Brussel\\
  Belgium}
\email{philippe.cara@vub.be}

\author[S. Dorfling]{Samantha Dorfling}
\address{1970,Belgium}
\email{dorflingsamantha@gmail.com}

\keywords{generalized colouring, additive hereditary graph property,
  total colouring, maximal planar graph, planar graph, forest}
\subjclass[2010]{05C10, 05C15, 05C70}

\begin{document}
\maketitle

\begin{abstract}
  In this paper we study generalised total colourings of graphs where
  the colour classes formed by vertices and edges, respectively,
  induce forests, while incident edges/vertices receive distinct
  colours. In \cite{bobr16} it was conjectured that for planar graphs,
  four colours suffice for this type of colouring. We confirm this
  conjecture for two infinite families of planar graphs.

\end{abstract}


\section{Introduction}
In this paper we study simple, loopless, undirected graphs. For all
undefined graph theoretical terms we refer the reader to
\cite{ChartLes}. For a graph $G$, we will denote its set of vertices
and its set of edges by $V(G)$ and $E(G)$, respectively.

The colourings we are interested in were introduced in the general
context of hereditary properties, in~\cite{bokemi2011}. Although we will
only be concerned with one particular case of the so-called  $({\cal
  P},{\cal Q})$-total colourings,  we briefly sketch the broader
context for completeness, following \cite{bobr97}.
Let $\ci$ denote the class of all graphs. A \emph{graph property} is
any isomorphism-closed subclass of $\ci$. Let $\cp$ be such a
subclass. If a graph $G$ is a member of  
$\cp$ we will say that $G$ \emph{has property $\cp$}. A property $\cp$
is called \emph{additive} if for each graph 
$G$, all of whose components have the property $\cal P$, it follows that $G$
lies in $\cp$ too.  A property $\cp$ is said to be \emph{hereditary}
if, whenever $G$ lies in $\cp$, and $H$ is a subgraph of $G$, then $H$
also lies in $\cp$.

Well-known additive and hereditary graph
  properties are for example (see \cite{bobr97}):

  \begin{itemize}
  \item[] $\co = \{G \in \ci:E(G) = \emptyset \}$,
  \item[] $\cs_k = \{G \in \ci: \mbox{the maximum degree of $G$ is at
      most $k$} \}$,
  \item[] $\co_k = \{G \in \ci: \mbox{each component of $G$ has order
      at most $k+1$}\}$,
  \item[]
    $\cw_k = \{G \in \ci: \mbox{the length of the longest path in $G$
      is at most $k$} \}$,
  \item[] $\cd_k = \{G \in \ci: \mbox{ every subgraph of $G$ has
      minimum degree at most $k$}\}$.
  \end{itemize}

Let ${\cal P}$ and ${\cal Q}$ denote additive hereditary graph
properties. A \emph{$({\cal P},{\cal Q})$-total colouring} of a simple
graph $G$ is a colouring of the vertices and edges of $G$ such that for
each colour $i$, the vertices coloured by $i$ induce a subgraph of $G$
with property ${\cal P}$, the edges coloured by $i$ induce a subgraph
of $G$ with property ${\cal Q}$, and incident vertices and edges are
coloured distinctly. The minimum number of colours needed such that a
graph $G$ has a $({\cal P},{\cal Q})$-total colouring is called the
\emph{(${\cal P},{\cal Q}$)-total chromatic number} of $G$ and is denoted by
$\chi''_{{\cal P},{\cal Q}}(G)$.

Early studies of total colourings of graphs occur in \cite{bechco67},
\cite{be69} and \cite{Vija71}. In \cite{bokemi2011}, the
authors then generalised the idea of total colourings of graphs and
describe such colourings in the context of additive hereditary graph
properties. In particular, they found upper and lower bounds for the
generalized total colourings of graphs with various additive
hereditary properties.  

In \cite{bobr16}, they study generalised total colourings for families of
planar graphs and give upper bounds for these chromatic numbers for
proper colourings of the vertices while the monochromatic edge sets
are allowed to be forests. Using the terminology and symbols
introduced above, the total chromatic number
$\chi''_{{\cal O},{\cal 
    D}_1}(G)$ is studied for $G$ planar. In particular they show that
if an even planar triangulation has a Hamilton cycle $H$ for which
there is no cycle among the edges inside $H$, then such a graph needs
at most four colours for a $({\cal O},{\cal D}_1)$-total colouring as
described above.

Furthermore, in \cite{bobr16}, they conjecture that for all planar
graphs $G$ one must have 
$\chi''_{{\cal D}_1,{\cal D}_1}(G) \leq 4$. 
In this paper we define two classes of maximal planar graphs and show
that for all graphs $G$ in these classes, 
$\chi''_{{\cal D}_1,{\cal D}_1}(G) = 
4$. Therefore, we can confirm that for these two infinite classes of planar
graphs the above-mentioned conjecture in \cite{bobr16} is true. We
remark that, like in most results from~\cite{bobr16}, our graphs are
hamiltonian.

\section{Notation and basic results}

A \emph{$\cd_1$ vertex colouring} of a graph $G$ is a colouring of the
vertices of $G$ such that the subgraphs of $G$ induced by these colour
classes are forests (i.e. lie in ${\cal D}_1$). 

A planar graph $G$ is called {\em maximal planar} if the addition of
any edge to $G$ results in a nonplanar graph.

The following lemma is a well-known characterisation of maximal planar
graphs, found in~\cite{ChartLes} for example:
\begin{lemma}\label{MPGChara}
  A planar graph $G$ with order $n\geq 3$ and size $m$ is maximal
  planar if and only if $m=3n-6$.
\end{lemma}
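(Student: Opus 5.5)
The plan is to prove both directions through Euler's formula $n-m+f=2$ for a connected plane embedding of $G$, together with a count of edge--face incidences. Throughout, fix a plane embedding of $G$ with $f$ faces. A maximal planar graph on $n\geq 3$ vertices is connected, since otherwise an edge between two components could be added inside a common face. For the converse direction we may also assume connectivity: if $G$ were disconnected, applying Euler's formula to each component gives $m\leq 3n-6$ strictly less (one gets $m\leq 3n-6-3(c-1)$ or similar), so $m=3n-6$ forces connectivity. This case should be checked briefly but is routine.

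\emph{Step 1 (the inequality $m\le 3n-6$).} For a connected simple planar graph with $n\geq 3$, every face boundary walk has length at least $3$, and each edge contributes exactly two to the total of the face lengths. Hence $2m\geq 3f$. Substituting $f=2-n+m$ gives $m\leq 3n-6$, with equality if and only if every face is bounded by a walk of length exactly $3$, that is, if and only if the embedding is a triangulation. The degenerate small cases, such as a tree on $3$ vertices whose single face has boundary walk of length $4$, still satisfy the face-length bound, so nothing breaks.

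\emph{Step 2 ($\Leftarrow$).} If $m=3n-6$, then adding any edge yields a simple graph with $3n-5>3(n)-6$ edges. By Step 1 this graph is not planar, so $G$ is maximal planar.

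\emph{Step 3 ($\Rightarrow$).} Suppose $G$ is maximal planar. It suffices to show that every face of the embedding is a triangle, since then $2m=3f$ and Euler's formula gives $m=3n-6$. This is the main obstacle. Take a face $F$ whose boundary walk has length at least $4$. If the walk contains two non-adjacent vertices $u,w$ on it, then the edge $uw$ can be drawn inside $F$, contradicting maximality. So every pair of vertices on the boundary of $F$ must already be adjacent.

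Now pick four consecutive boundary vertices $v_1,v_2,v_3,v_4$, or handle a repeated vertex separately. The edge $v_1v_3$ exists but lies outside $F$, and so does $v_2v_4$. A standard Jordan-curve argument shows that the chords $v_1v_3$ and $v_2v_4$, both drawn in the exterior of the face $F$, must cross. This contradicts planarity. The case where the boundary walk revisits a vertex (that is, $F$ touches a cut vertex) is handled similarly: there we find two non-adjacent boundary vertices in different blocks and join them through $F$. Alternatively, we can simply cite \cite{ChartLes} for the fact that maximal planar graphs of order at least $3$ are triangulations, as the lemma is stated as well known.
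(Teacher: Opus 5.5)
The paper gives no proof of this lemma; it cites it as well known from \cite{ChartLes}. Your argument is the standard textbook proof and it is correct. You use Euler's formula with $2m\ge 3f$, where equality holds exactly for triangulations, and then show that every face of a maximal plane graph is a triangle.

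Two small loose ends are worth tightening.

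First, the bound $m\le 3n-6-3(c-1)$ for disconnected graphs is false as written: $K_2\cup K_1$ has $m=1>0$. What you actually need follows immediately by another route. Adding edges between components preserves planarity, so applying your connected bound gives $m<3n-6$ whenever $G$ is disconnected.

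Second, in the cut-vertex case, say why the walk-neighbours $a,b$ of a repeated vertex $x$ at one of its occurrences are non-adjacent. A simple closed curve leaving $x$ through that corner of $F$ and returning through another corner of $F$ at $x$ meets $G$ only in $x$. It separates the edges $xa$ and $xb$, so $a$ and $b$ lie in different components of $G-x$. Hence $ab$ can be drawn through $F$, contradicting maximality.
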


A maximal planar graph that contains a hamiltonian cycle will be
called \emph{hamiltonian maximal planar}.

We will construct a first class of maximal planar hamiltonian graphs.

Suppose that $n \geq 4$ is any integer.
Let $G = (V(G),E(G))$ be the planar graph we obtain as follows: \\
Let $C_n$ denote a cycle on $n$ vertices; labelled $v_1, \ldots, v_n$. \\
Add the following edges $v_iv_j$ to $C_n$ (on the inside of $C_n$): \\
where $i+j=n+2$ for all $2 \leq i,j \leq n$ and \\
where $i+j = n+ 1$ for all $2 \leq i,j \leq n$. \\
Finally, if $n > 4$, then also add the edges $v_1v_i$ for all
$3 \leq i < n-1$ on the outside of the cycle. The class of all such
graphs $G$ will be denoted by $\mathsf{MH}$. Our construction is illustrated in
Figure~\ref{fig1}. 

 \psfrag{v1}{$v_{1}$}\psfrag{v2}{$v_{2}$}\psfrag{v3}{$v_{3}$}
  \psfrag{v4}{$v_{4}$}\psfrag{v5}{$v_{5}$}\psfrag{v6}{$v_{6}$}
  \psfrag{v7}{$v_{7}$}\psfrag{v8}{$v_{8}$}\psfrag{v9}{$v_{9}$}
  \psfrag{v10}{$v_{10}$}\psfrag{v11}{$v_{11}$}\psfrag{v12}{$v_{12}$}

\begin{figure}[hbtp]
  \centering
  \includegraphics[width=.6\textwidth]{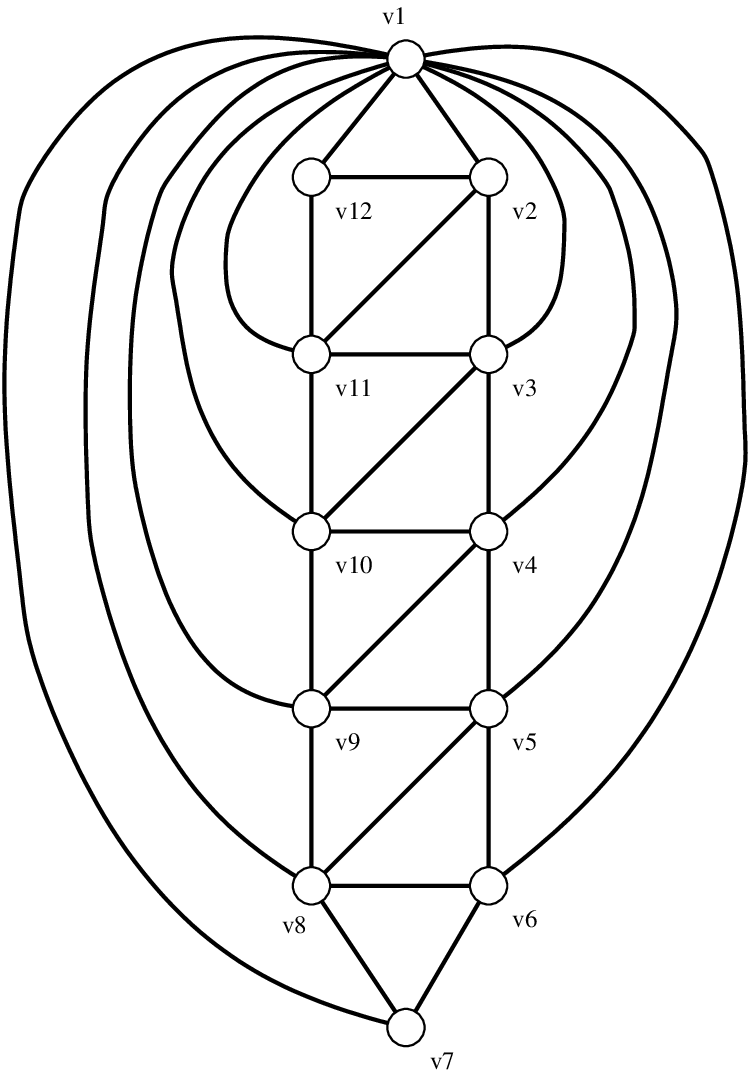}
  \caption{The member of $\mathsf{MH}$ based on $C_{12}$.}\label{fig1}
\end{figure}

\begin{lemma}\label{MaxHam}
  Every graph in the class $\mathsf{MH}$ is maximal planar and hamiltonian.
\end{lemma}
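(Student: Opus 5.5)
Hamiltonicity is immediate, since $v_1v_2\cdots v_nv_1$ is the cycle $C_n$ we started from and is contained in $G$. For maximality I would use Lemma~\ref{MPGChara}: show that $G$ is planar and that $|E(G)|=3n-6$.

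\textbf{Planarity.} Draw $C_n$ as a convex polygon. The inner chords are $v_iv_j$ with $i+j\in\{n+1,n+2\}$ and $2\le i,j\le n$. Two such chords $v_av_b$, $v_cv_d$ (with $a<b$, $c<d$, all indices in $\{2,\dots,n\}$) lie in nested intervals: the larger sum pairs with the larger right end. Concretely, listing the chords by their left endpoint, they form a zigzag
\[
v_2v_n,\ v_2v_{n-1},\ v_3v_{n-1},\ v_3v_{n-2},\ \dots,
\]
in which each interval $[a,b]$ contains the next one. Nested chords do not cross, so the interior drawing is planar and the inside is triangulated by the triangles $v_iv_jv_{j-1}$ and $v_iv_{i+1}v_j$ (a ``zigzag'' triangulation). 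The outer chords $v_1v_i$ with $3\le i\le n-2$ form a fan from $v_1$ in the outer face, so they do not cross each other. Their interiors are disjoint from the interior of the disk, so they do not cross the inner chords either.

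\textbf{Edge count.} $C_n$ contributes $n$ edges. Count the inner chords for each sum $s$ and exclude the pairs $\{i,j\}$ with $i=j$ or $|i-j|=1$, since those are loops or cycle edges.
\begin{itemize}
\item For $s=n+2$, the unordered pairs $\{i,j\}$ with $2\le i<j\le n$ run over $i=2,\dots$ with $i<(n+2)/2$, excluding $|i-j|=1$.
\item For $s=n+1$, the same count applies with $i<(n+1)/2$.
\end{itemize}
Between the two sums this yields $n-3$ distinct genuine chords, which is exactly a triangulation of the $(n-1)$-gon $v_2\cdots v_n$ augmented by the edge $v_2v_n$. Equivalently, the inside is a triangulation of the $n$-gon, and every such triangulation has $n-3$ chords. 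The outer fan contributes $n-4$ edges. The total is $n+(n-3)+(n-4)=3n-7$, which is one short of $3n-6$. So one edge must be counted again, most likely the boundary edge $v_1v_n$ or $v_1v_2$ playing a role in the outer face, or the inner zigzag contributing one more chord than I estimated.

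\textbf{Main obstacle.} The hardest step is this exact bookkeeping, which I would settle by parity of $n$ and check against $n=4$ and against Figure~\ref{fig1} with $n=12$. A safer alternative is to verify the face structure directly, showing every face is a triangle:
\begin{itemize}
\item The inside is a zigzag triangulation once we confirm the chord set has $n-3$ elements and covers the chord $v_2v_n$, which cuts off the triangle $v_1v_2v_n$.
\item Outside, the fan from $v_1$ over the path $v_2v_3\cdots v_n$ gives triangles $v_1v_iv_{i+1}$, except near $v_{n-1}$. There the face $v_1v_{n-2}v_{n-1}v_n$ must be triangulated by some existing edge. I expect the chord $v_{n-2}v_n$, if present, to be drawable outside instead, or the range of $i$ to be interpreted so that everything closes up.
\end{itemize}
Once every face is a triangle, Euler's formula gives $m=3n-6$, and Lemma~\ref{MPGChara} finishes the proof.
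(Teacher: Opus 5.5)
Your route matches the paper's: Hamiltonicity from $C_n$, then an edge count and Lemma~\ref{MPGChara}. Your nesting argument for the inner chords usefully fills in the paper's ``clearly planar''. The gap is that the proposal never reaches $3n-6$. It stops at $3n-7$ and then guesses at repairs, none of which can work.

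Your count is in fact correct for the construction as literally written. There are exactly $n-3$ inner chords: $\frac{n-3}{2}+\frac{n-3}{2}$ for odd $n$ and $\frac{n-2}{2}+\frac{n-4}{2}$ for even $n$. So no inner chord is missing. The edge $v_{n-2}v_n$ does not exist for $n\ge 5$, since $2n-2\notin\{n+1,n+2\}$. Counting $v_1v_n$ or $v_1v_2$ a second time is just double counting. With the outer range $3\le i<n-1$, and no outer edges at all when $n=4$, the face $v_1v_{n-2}v_{n-1}v_n$ really is a quadrilateral; for $n=4$ the graph is $K_4$ minus $v_1v_3$. So under this literal reading the graph is not maximal planar, and no bookkeeping can rescue the statement.

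The missing step is to identify and fix the definition. The outer fan must consist of the edges $v_1v_i$ for all $3\le i\le n-1$, for every $n\ge 4$. This is exactly what the paper's proof counts: it adds ``$n-3$ outside edges'', even though the range $3\le i<n-1$ it quotes has only $n-4$ elements. It is also what the rest of the paper relies on. The proof of Lemma~\ref{VertexCol} calls $v_1$ universal, and the proof of Theorem~\ref{MainMH} calls $K_4$ the smallest member of $\mathsf{MH}$. Once $v_1v_{n-1}$ is present, your face check closes, because the last outer face is the triangle $v_1v_{n-1}v_n$. The count becomes $n+(n-3)+(n-3)=3n-6$, and Lemma~\ref{MPGChara} applies. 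A complete proof has to state this correction explicitly rather than leave the discrepancy open.
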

\begin{proof}
  Let $G$ be any graph in $\mathsf{MH}$ with order $n \geq 4$. Then clearly $G$
  is planar as well as Hamiltonian. From the construction above we
  obtain the size of $G$ in the case where $n$ is even and where $n$
  is odd. First suppose that $n$ is odd, then to find the size of $G$
  we add the $n$ cycle edges to the $\frac{n-3}{2}$ inside edges
  $v_iv_j$ (where $i+j = n+2$ and $2 \leq i \leq n$) and to this we
  add the $\frac{n-3}{2}$ inside edges $v_iv_j$ (where $i+j = n+1$ and
  $2 \leq i \leq n$) and then we add the $n-3$ outside edges $v_1v_i$
  (where $3 \leq i < n-1$). Therefore the size of $G$ is
  $n+ \frac{n-3}{2} + \frac{n-3}{2} + n-3 = 3n - 6$ and thus, by
  Lemma~\ref{MPGChara}, it follows that $G$ is maximal planar.

  Next suppose that $n$ is even, then adding the edges in the same
  order as above gives size
  $n+ \frac{n-2}{2} + \frac{n-4}{2} + n-3 = 3n - 6$ and so again, by
  Lemma~\ref{MPGChara}, $G$ is maximal planar.
\end{proof}

\section{Generalized total colouring in the class $\mathsf{MH}$}

The main result in this section will show that for $G\in \mathsf{MH}$, we have
$\chi''_{{\cal D}_1,{\cal D}_1}(G) = 4$. In order to prove this, we
will need a few lemmas.

First, it will be useful to define the following sets.  For all
$k \in \{0,1,2\}$ let
$S_k = \{ x \in \mathbb N \mid x \bmod 3 = k \}$.



\begin{lemma}\label{VertexCol}
  Let $G$ be any graph in $\mathsf{MH}$ with vertex set
  $V(G) = \{v_1,v_2, \ldots,v_n\}$. Then the vertex colouring
  $c: V(G) \imp \{1,2\}$ of $G$, defined as $c(v_1) = 1$ and
  for all $i \in S_2$ such that
  $2 \leq i \leq \lceil\frac{n}{2}\rceil$, $c(v_i) = c(v_{n-i}) = 1$
  and otherwise $c(v_i) = 2$, is a $\cd_1$ vertex colouring of $G$.
\end{lemma}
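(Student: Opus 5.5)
The plan is to use the special role of $v_1$. The outside edges $v_1v_i$ for $3\le i<n-1$, together with the cycle edges $v_1v_2$ and $v_1v_n$, make $v_1$ adjacent to every vertex except possibly $v_{n-1}$. The graph $G-v_1$ is then the path $v_2v_3\cdots v_n$ together with the inside chords $v_iv_j$ with $i+j\in\{n+1,n+2\}$. This is a triangulated polygon, hence outerplanar, in which the chords form a zig-zag between the ``low'' indices $2,3,\dots$ and the ``high'' indices $n,n-1,\dots$.

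I would verify the two colour classes separately.

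\textbf{Colour class 1.} Let $A=\{v_i,v_{n-i} : i\in S_2,\ 2\le i\le\lceil n/2\rceil\}$, so that the class is $\{v_1\}\cup A$. I will show that $A$ is an independent set in $G$, apart from possibly one edge at $v_{n-1}$, which is not joined to $v_1$. If so, the subgraph induced by colour $1$ consists of the star centred at $v_1$ plus at most an isolated or pendant edge, and is a forest.
\begin{itemize}
\item Two low vertices $v_i,v_{i'}$ of $A$ differ in index by a multiple of $3$, so they are not cycle-adjacent. Two low vertices are also never joined by a chord: a chord $v_iv_j$ has $i+j\ge n+1$, which forces one endpoint to have index at least $\lceil n/2\rceil$.
\item The same argument handles two high vertices $v_{n-i},v_{n-i'}$.
\item A mixed pair $v_i,v_{n-i'}$ is adjacent only if $i+(n-i')\in\{n+1,n+2\}$, that is, $i-i'\in\{1,2\}$. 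This is impossible, since $i\equiv i'\pmod 3$. A mixed pair could also be adjacent along the cycle, but only near the middle index $\lceil n/2\rceil$, and I will check that boundary separately.
\end{itemize}

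\textbf{Colour class 2.} This class lies entirely in $G-v_1$. To show that it induces a forest, I would order the vertices of $G-v_1$ along the zig-zag in which consecutive triangles of the triangulated polygon are added, with each new vertex attached to the two ends of an existing edge. I will then show that every triangle of this triangulation contains a vertex of colour $1$; concretely, every triple of indices $i,i+1$ (the low side) and $n+1-i$ or $n+2-i$ (the high side) contains an index of the form $j$ or $n-j$ with $j\equiv 2\pmod 3$. Then each colour-2 vertex, when added in this order, has at most one earlier colour-2 neighbour. This is a $1$-degenerate ordering, so the subgraph induced by colour $2$ is a forest.

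I expect the main obstacle to be the index bookkeeping. The offset between the high vertices $v_{n-i}$ that receive colour $1$ and the chord partners $v_{n+2-i}$ and $v_{n+1-i}$ has to be tracked carefully to confirm that every triangle meets colour $1$. The middle of the cycle, around $\lceil n/2\rceil$, where the low and high sequences meet, has to be handled separately for each residue of $n$ modulo $6$. The ends $v_2$, $v_n$ and $v_{n-1}$ also need separate treatment, because $v_1$ may fail to be adjacent to $v_{n-1}$. I would first tabulate small cases, $n=4,\dots,9$, directly, and then argue that the pattern is periodic with period $3$ in $i$ away from these boundary regions.
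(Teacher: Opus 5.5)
Your plan has a genuine gap, and it sits exactly in the step you postpone, the ``boundary'' near $\lceil n/2\rceil$. It cannot be completed, because for odd $n\equiv 3$ or $5\pmod 6$ the colouring in the statement is not a $\mathcal{D}_1$ colouring.

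The cause is that the ``high'' vertex $v_{n-i}$ lies on the high side of the zig-zag only when $i\le\lfloor n/2\rfloor-1$. For odd $n$ and $i\in\{(n-1)/2,(n+1)/2\}\cap S_2$, the vertex $v_{n-i}$ lands among the low indices, cycle-adjacent to another colour-$1$ vertex. Together with $v_1$ this gives a monochromatic triangle. Concretely:
\begin{itemize}
\item For $n=5$, $i=2$ gives $c(v_2)=c(v_3)=1$. Here $v_1v_2$, $v_2v_3$ and $v_1v_3$ are all edges; the last is an outside edge, since $3\le 3<n-1$.
\item For $n=9$, colour class $1$ is $\{v_1,v_2,v_4,v_5,v_7\}$. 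Here $v_4v_5$ is a cycle edge and $v_4v_7$ is a chord ($4+7=n+2$), so $v_1v_4v_5$ and $v_1v_4v_7$ are triangles.
\item For $n=11$, you get $c(v_5)=c(v_6)=1$.
\end{itemize}
None of this uses $v_1v_{n-1}$, so it holds under either reading of the outside edges. Your own planned tabulation of $n=4,\dots,9$ would expose it.

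Your second bullet is also unjustified. For two high vertices the bound $i+j\ge n+1$ is no obstruction, and $2n-i-i'=n+2$ does occur when $i+i'=n-2$; for $n=9$ this is the chord $v_4v_7$. The paper's proof has the same hole. It only compares $i+j \bmod 3$ with the chord sums $n+1,n+2$ for one low and one high vertex, and never considers cycle edges or pairs of high vertices.

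What your argument does prove is the lemma for $n$ even or $n\equiv 1\pmod 6$. In those cases the colour-$1$ vertices other than $v_1$ are exactly every third vertex of the sequence $v_n,v_2,v_{n-1},v_3,v_{n-2},\dots$, starting with $v_2$. Since $G-v_1$ is the square of the path along this sequence, such a set is independent in $G-v_1$ and meets every triangle. This gives a star centred at $v_1$ for colour $1$ and a linear forest for colour $2$, which is your $1$-degenerate ordering argument.

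To get a correct statement for all $n\ge 4$, give $v_{n-i}$ colour $1$ only for $i\in S_2$ with $2\le i\le\lfloor n/2\rfloor-1$. This agrees with the paper's colouring whenever the latter is valid, and your proof then goes through verbatim.
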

\begin{proof}
  Let $G$ be any graph in $\mathsf{MH}$ with vertex set
  $V(G) = \{v_1,v_2, \ldots,v_n\}$ and let $c$ be the colouring in the
  statement of the lemma. For $i=1,2$ we will use $\langle i\rangle$
  to denote the set of all vertices $v$ in $V(G)$ such that $c(v)=i$.

  By the construction of graphs in $\mathsf{MH}$ and by definition of $c$, we
  know that $v_1$ is a universal vertex in $G$ and we know that
  $c(v_1)=1$. Furthermore, for all vertices $v_i$ and $v_j$ in $V(G)$
  with $c(v_i) = c(v_j) = 1$ (i.e $v_i,v_j \in \langle 1\rangle$), if $i,j \neq 1$,
  then $v_iv_j \not\in E(G)$: since otherwise, if $v_iv_j$ is an edge
  in $G$, then, without loss of generality,
  $2 \leq i \leq \lceil\frac{n}{2}\rceil$ and $i \in S_2$ while
  $j = n - k$ for some $k \in S_2$ such that
  $2 \leq k \leq \lceil \frac{n}{2}\rceil$. However, then
  $i + j = (3l + 2) + n - (3m + 2) = 3(l - m) + n$ for some integers
  $l$ and $m$, and by the construction of graphs in $\mathsf{MH}$, the edge
  $v_iv_j$ cannot exist.  Therefore the subgraph of $G$ induced by the
 colour  class $\langle 1\rangle$ is acyclic.

  Furthermore, the subgraph of $G$ induced by 
  $\langle 2\rangle$ is a path\\
  $P_1: v_n,v_{n-1}, v_3,v_4,v_{n-3},v_{n-4},v_6,v_7, 
  v_{n-6},v_{n-7}, \ldots, v_{\lceil\frac{n}{2}\rceil}$ in the case
  where $n \bmod 6 \in \{0,1,5\}$ and a path
  $P_2: v_n,v_{n-1}, v_3,v_4, v_{n-3},v_{n-4},v_6,v_7,v_{n-6},
  v_{n-7}, \ldots, v_{\lceil\frac{n}{2}\rceil + 1}$ in the case where
  $n \bmod 6 \in \{2,3,4\}$. 
  Therefore the subgraph induced by the colour class
  $\langle 2\rangle$ is also acyclic and thus the result holds.
\end{proof}

\psfrag{u}[][]{$2$}\psfrag{v}[][]{$1$}

\begin{figure}[hbtp]
  \centering
    \includegraphics[height=.4\textheight]{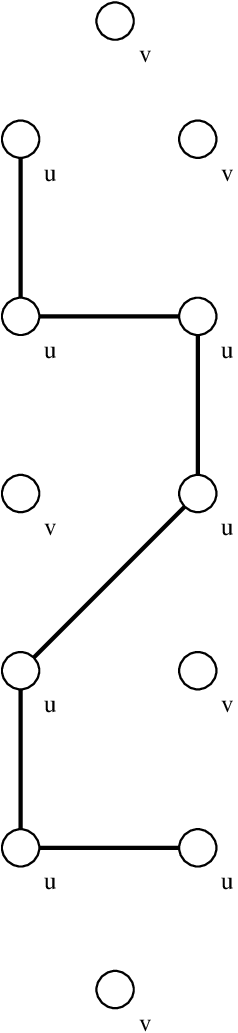}
    \qquad
    \includegraphics[height=.4\textheight]{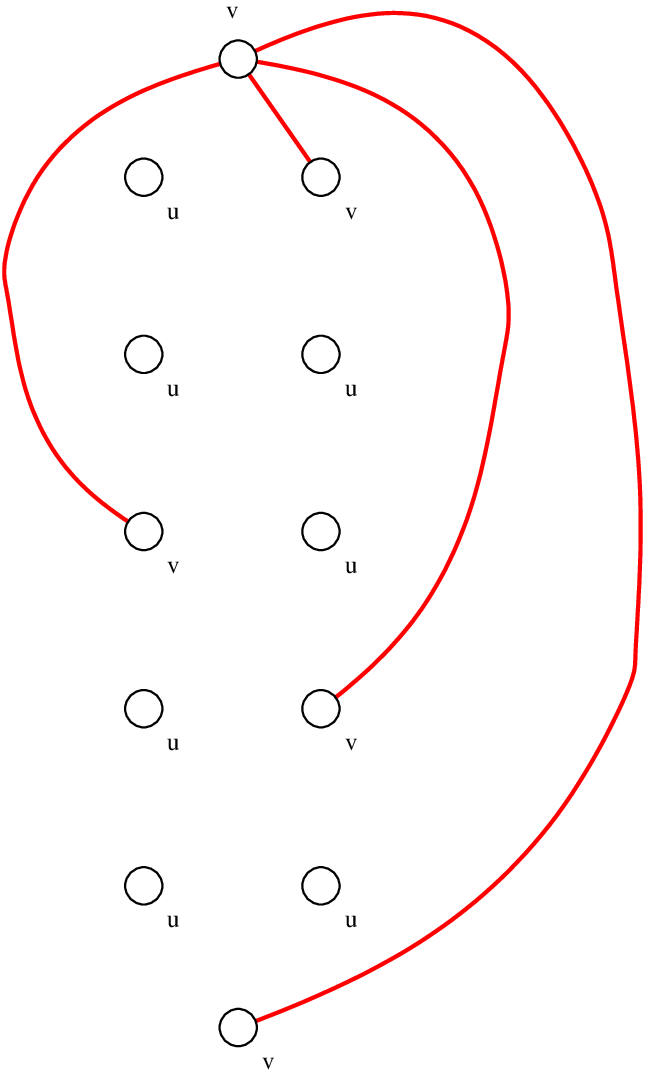}
  \caption{The vertex colouring and induced acyclic subgraphs in our
    example with $12$ vertices.}\label{fig2}
\end{figure}

For every graph $G$ in $\mathsf{MH}$ with order $n \geq 4$, let $G_n$ denote the graph obtained from $G$ as follows:
\begin{enumerate}
\item Colour the vertices of $G$ with the colouring $c$ defined in
  Lemma~\ref{VertexCol}.
\item Remove all edges from $G$ that join vertices with the
  same colour.
\end{enumerate}

The graph $G_{12}$ obtained from our example of order~$12$ is shown in
Figure~\ref{fig3}.

\begin{figure}[hbtp]
  \centering
  \includegraphics[height=.4\textheight]{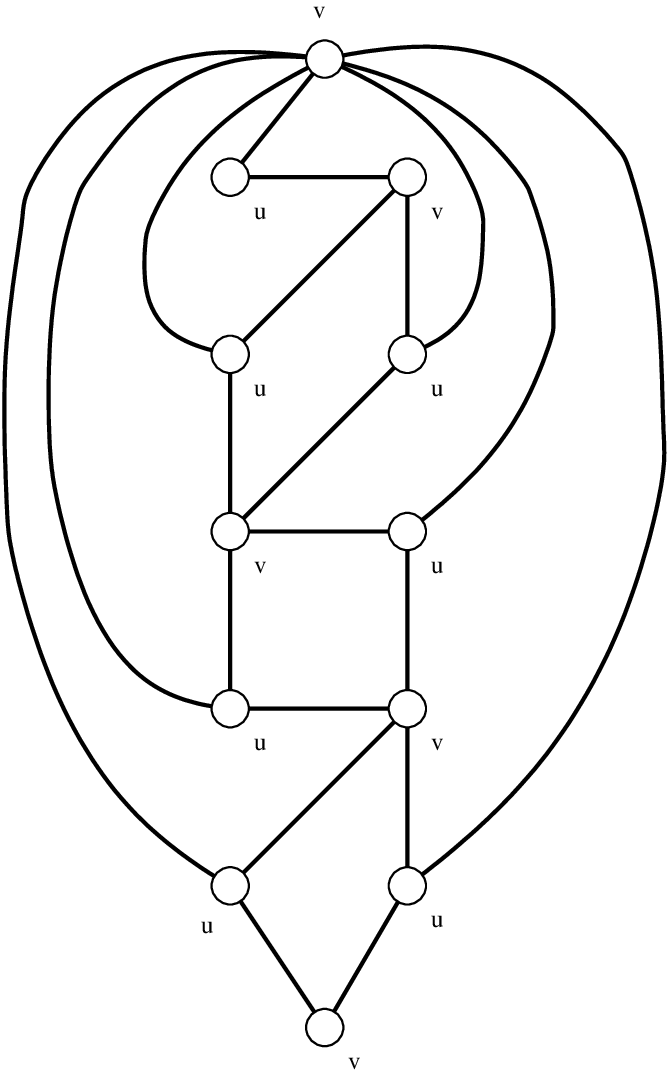}
  \caption{The subgraph $G_{12}$.}\label{fig3}
\end{figure}

Note that the graph $G_n$ is planar and that every edge of $G_n$ lies
on a cycle of length four. Furthermore, for all integers $n \geq 4$,
the graph $G_n$ contains a vertex of degree 2, and in particular, deg$_{G_n}(v_n) = 2$.

In the following lemma, let $G_n$ denote the graph constructed above,
with vertex set $V(G) = \{v_1,v_2, \ldots, v_n\}$, numbered as in the
construction above.

\begin{lemma}\label{NestedSG}
  For all positive integers $n \geq 5$, if $H$ and $G$ are graphs in
  $\mathsf{MH}$ with vertex sets $V(H) = \{u_1, u_2, \ldots ,u_{n-1}\}$ and
  $V(G) = \{v_1,v_2, \ldots,v_{n}\}$ respectively, then the graph
  $H_{n-1}$ is isomorphic to $G_n - v_{\lceil\frac{n+1}{2}\rceil}$ if
  $n$ is even, and isomorphic to $G_n - v_{\lceil\frac{n}{2}\rceil}$
  if $n$ is odd. In both cases the removed vertex has degree $2$ in
  $G_n$. 
\end{lemma}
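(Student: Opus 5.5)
The plan is to reparametrise every graph in $\mathsf{MH}$ along a single "zigzag" path, so that $G$ becomes a universal vertex joined to the square of a path. In this form, deleting the last vertex of the path visibly produces the smaller member of the class, together with its colouring.

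\textbf{Step 1 (zigzag labelling).} For a graph $G\in\mathsf{MH}$ of order $n$, define $w_1,\dots,w_{n-1}$ by $w_{2k}=v_{k+1}$ and $w_{2k-1}=v_{n-k+1}$. This gives $w_1=v_n$, $w_2=v_2$, $w_3=v_{n-1}$, $w_4=v_3$, and so on.
\begin{itemize}
\item The inner edges with $i+j=n+2$ are exactly the pairs $w_{2k-1}w_{2k}$.
\item The inner edges with $i+j=n+1$ are exactly the pairs $w_{2k}w_{2k+1}$.
\item The cycle edges $v_iv_{i+1}$ with $i,i+1\ge 2$ are exactly the pairs $w_kw_{k+2}$.
\item The vertex $v_1$ is adjacent to everything: to $v_2,v_n$ via the cycle and to the rest via the outside edges.
\end{itemize}
Hence $G\cong K_1+P_{n-1}^2$, with $v_1$ as the apex and $w_1,\dots,w_{n-1}$ as the path. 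The last path vertex is $w_{n-1}=v_{n/2+1}=v_{\lceil\frac{n+1}{2}\rceil}$ for $n$ even, and $w_{n-1}=v_{(n+1)/2}=v_{\lceil\frac n2\rceil}$ for $n$ odd. These are exactly the vertices removed in the statement.

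\textbf{Step 2 (the colouring in the new labels).} Next I would translate the colouring $c$ of Lemma~\ref{VertexCol} into this labelling. The claim is that $c(w_k)=1$ if and only if $k\equiv 2 \pmod 3$, possibly up to an exceptional effect at the very end of the path. This is suggested by the first few values: $v_2=w_2$, $v_{n-2}=w_5$, $v_5=w_8$, $v_{n-5}=w_{11}$, and so on. The cutoff $i\le\lceil\frac n2\rceil$ in the definition of $c$ only affects the last one or two path vertices. I expect this to be the \textbf{main obstacle}. One has to run through the residues of $n \bmod 6$, in the same six cases implicitly used for the paths $P_1,P_2$ in Lemma~\ref{VertexCol}. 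In each case one checks that deleting $w_{n-1}$ leaves on $w_1,\dots,w_{n-2}$ exactly the colouring that Lemma~\ref{VertexCol} prescribes for the graph of order $n-1$. Under $u_i\leftrightarrow$ the corresponding zigzag vertex, the order-$(n-1)$ colouring is again periodic in the path index, so the check reduces to comparing the end effects for $n$ and $n-1$. Whatever happens at the boundary, the essential point is that the colour of each $w_k$ depends only on $k$ and not on $n$, except possibly at the final vertex.

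\textbf{Step 3 (conclusion).} Since $G_n$ is obtained from $K_1+P_{n-1}^2$ by deleting the monochromatic edges, $G_n-w_{n-1}$ is $K_1+P_{n-2}^2$ minus the monochromatic edges under the restricted colouring. By Step 2, this is $H_{n-1}$ under the map $u_1\mapsto v_1$ and $u$-zigzag vertices $\mapsto w_k$, which gives the isomorphism.

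For the degree claim, $w_{n-1}$ has neighbours $v_1,w_{n-2},w_{n-3}$ in $G$. In the periodic colouring, exactly two of these three have the colour opposite to $w_{n-1}$, and I would verify this in each residue case. If $c(w_{n-1})=2$, the surviving neighbours are $v_1$ and the single colour-$1$ vertex among $w_{n-2},w_{n-3}$. If $c(w_{n-1})=1$, the edge to $v_1$ is removed and the surviving neighbours are $w_{n-2}$ and $w_{n-3}$, both of colour $2$. In either case $\deg_{G_n}(w_{n-1})=2$.
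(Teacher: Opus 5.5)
\textbf{Verdict: genuine gap at Step~2.} Your route is otherwise sound, but the one step you defer cannot be completed for the colouring as defined in the paper.

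Your Step~1 is correct and is a cleaner route than the paper's. The zigzag labelling exhibits $G\cong K_1+P_{n-1}^2$ with apex $v_1$, and the vertex to delete is the last path vertex $w_{n-1}$. The paper works differently: it inserts a new vertex into the face $u_1u_{\lceil\frac{n-1}{2}\rceil}u_{\lceil\frac{n-1}{2}\rceil+1}$ of $H$, derives degree $2$ locally from the acyclicity of $c$ on triangles through $v_1$, and simply asserts that the colourings of $H$ and $G$ are consistent.

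Step~2 fails for $c$ exactly as written in Lemma~\ref{VertexCol}. For odd $n$, the clause $c(v_{n-i})=1$ with $i\in\{\frac{n-1}{2},\frac{n+1}{2}\}$ lands in the left half:
\begin{itemize}
\item for $n\equiv 5\pmod 6$ it gives $c(w_{n-1})=1$;
\item for $n\equiv 3\pmod 6$ it gives $c(w_{n-3})=1$.
\end{itemize}
So the deviation is not confined to the final vertex, contrary to your hedge. In both cases $v_1,w_{n-3},w_{n-1}$ is a colour-$1$ triangle and $\deg_{G_n}(w_{n-1})=1$. Already for $n=5$ one gets $c(v_1),\dots,c(v_5)=1,1,1,2,2$, and $v_3$ keeps only the edge $v_3v_4$ in $G_5$.

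The restriction check fails as well. For $n=6$ the colour-$1$ class of $H$ is $\{u_1,u_2,u_3\}$, and $H_5$ has $5$ edges while $G_6-v_4$ has $6$; for $n=10$ the counts are $13$ versus $14$. So with the literal colouring the statement itself is false. No case analysis modulo $6$ can complete your Step~2, and the paper's one-line consistency claim hides the same defect.

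The missing step is to replace $c$ by the periodic colouring: $c(v_1)=1$, and $c(w_k)=1$ if and only if $k\equiv 2\pmod 3$.
\begin{itemize}
\item It agrees with the formula of Lemma~\ref{VertexCol} for even $n$ and for $n\equiv 1\pmod 6$.
\item Its colour-$2$ class $w_1,w_3,w_4,w_6,\dots$ is exactly the path $P_1$ or $P_2$ listed in that proof.
\item It is a $\mathcal{D}_1$ colouring: the colour-$1$ path vertices are pairwise at path distance at least $3$, so colour $1$ induces a star centred at $v_1$.
\end{itemize}
With this colouring, the colour of $w_k$ does not depend on $n$, so Step~2 is immediate. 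Your Step~3 then gives both the isomorphism and the degree claim for every $n\geq 5$, since exactly one of $w_{n-3},w_{n-2},w_{n-1}$ has index $\equiv 2\pmod 3$. In that corrected form your argument is complete and more transparent than the paper's.
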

\begin{proof}
  Take any integer $n \geq 5$ and let the graph $H_{n-1}$ be one that
  is constructed from a graph $H$ in $\mathsf{MH}$, with order $n-1$ and
  $V(H) = \{u_1, u_2, \ldots, u_{n-1}\}$. By Lemma~\ref{MaxHam}, $H$
  is a triangulation and we may choose an embedding in the plane such
  that the vertices 
  $u_1, u_{\lceil\frac{n-1}{2}\rceil}$ and
  $u_{\lceil\frac{n-1}{2}\rceil + 1}$ lie on the outer face of
  $H$. Note that any graph $G$ in $\mathsf{MH}$ (up to isomorphism) with order
  $n$, can be obtained 
  from $H$ by joining a single vertex $v$ with the vertices
  $u_1, u_{\lceil\frac{n-1}{2}\rceil}$ and
  $u_{\lceil\frac{n-1}{2}\rceil + 1}$ in $H$. One can then easily see
  the correspondence between 
  the vertices $V(G)=\{v_1,v_2,\ldots ,v_n\}$ used in the
  construction of $G$ and the vertices $\{u_1, u_2, \ldots,
  u_{\lceil\frac{n-1}{2}\rceil}, v, u_{\lceil\frac{n-1}{2}\rceil +
    1},\ldots , u_{n-1}\}=V(H)\cup\{v\}$. 

  Furthermore, if we colour the vertices of $G$ with the colouring $c$
  defined in Lemma~\ref{VertexCol}, then since this
  colouring is acyclic and
  $v_{\lceil\frac{n-1}{2}\rceil}v_{\lceil\frac{n-1}{2}\rceil+2}$ is an 
  edge in $G$ (corresponding to the edge
  $u_{\lceil\frac{n-1}{2}\rceil}u_{\lceil\frac{n-1}{2}\rceil+1}$ in
  $H$), that forms a cycle of length $3$ together with the vertex
  $v_1$ which has colour $1$, at least one of the vertices 
  $v_{\lceil\frac{n-1}{2}\rceil}$ and
  $v_{\lceil\frac{n-1}{2}\rceil+2}$ must have colour $2$. As the
  colouring is acyclic, the case
  $c(v_{\lceil\frac{n-1}{2}\rceil})=c(v_{\lceil\frac{n-1}{2}\rceil+2})=2$
  will yield $c(v_{\lceil\frac{n-1}{2}\rceil+1})=1$, implying that the
  degree of $v_{\lceil\frac{n-1}{2}\rceil+1}$ will be $2$ in $G_n$. In
  the case that $v_{\lceil\frac{n-1}{2}\rceil}$ and
  $v_{\lceil\frac{n-1}{2}\rceil+2}$ have different colours, the cycle
  consisting of $v_1$, $v_{\lceil\frac{n-1}{2}\rceil+1}$ and the
  vertex of colour $1$ in
  $\{v_{\lceil\frac{n-1}{2}\rceil},v_{\lceil\frac{n-1}{2}\rceil+2}\}$
  will yield colour $2$ for $v_{\lceil\frac{n-1}{2}\rceil+1}$, showing
  again that the degree of $v_{\lceil\frac{n-1}{2}\rceil+1}$ is $2$ in
  $G_n$. 

  Now, if $v_{\lceil\frac{n-1}{2}\rceil+1}$ is removed from the graph
  $G_n$, then the only edges 
  lost, come from the set
  $\{v_{\lceil\frac{n-1}{2}\rceil}v_{\lceil\frac{n-1}{2}\rceil+1},
  v_{\lceil\frac{n-1}{2}\rceil+1}v_{\lceil\frac{n-1}{2}\rceil+2},v_1v_{\lceil\frac{n-1}{2}\rceil+1}\}$
  and the remaining graph is isomorphic to $H_{n-1}$, as the colouring
  of $H$ is consistent with the colouring of $G$ constructed from $H$
  by adding the vertex $v$.

  To end the proof we remark that if $n$ is even we have
  $\lceil\frac{n-1}{2}\rceil+1=\lceil\frac{n}{2}\rceil+1$ and that if
  $n$ is odd we have
  $\lceil\frac{n-1}{2}\rceil+1=\lceil\frac{n}{2}\rceil$. So the vertex
  $v_{\lceil\frac{n-1}{2}\rceil+1}$ that we remove from $G_n$ is
  indeed equal to either $v_{\lceil\frac{n}{2}\rceil+1}$ or
  $v_{\lceil\frac{n}{2}\rceil}$, depending on the parity of $n$.
\end{proof}

\begin{lemma}\label{Decomp2Forests}
  Let $n \geq 4$ be any positive integer and $G$ be any graph in $\mathsf{MH}$
  with vertex set $V(G) = \{v_1,v_2, \ldots, v_n\}$. Then the edge set
  of the graph $G_n$ can be decomposed into two sets $F_1$ and $F_2$
  such that the graphs induced by each of these sets lie in
  ${\cal D}_1$.
\end{lemma}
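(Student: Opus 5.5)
Lemma~\ref{NestedSG} suggests induction on $n$. Each time we pass from order $n-1$ to order $n$, $G_n$ arises from $H_{n-1}$ by adding one vertex $w$ of degree $2$, where $w=v_{\lceil\frac{n-1}{2}\rceil+1}$. So I will prove by induction on $n\ge 4$ that $E(G_n)$ splits into two forests $F_1,F_2$.

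For the base case $n=4$, I will compute $G_4$ explicitly from the colouring of Lemma~\ref{VertexCol}. Here $c(v_1)=c(v_2)=1$ and $c(v_3)=c(v_4)=2$. The edges of $G$ are the cycle $v_1v_2v_3v_4$ together with the chords $v_2v_4$ (since $i+j=6$) and $v_3v_2$ (already a cycle edge; the chord with $i+j=5$). Removing the monochromatic edges $v_1v_2$ and $v_3v_4$ leaves $G_4$ with edges $v_2v_3, v_1v_4, v_2v_4$ and, since $v_1$ is universal, also $v_1v_3$. This is a $4$-cycle. Taking $F_1$ to be any three of its edges and $F_2$ the fourth gives a decomposition into two forests. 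As a sanity check I could also verify $n=5$ by hand, but it is not needed.

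For the inductive step, let $n\ge 5$ and suppose $E(H_{n-1})=F_1'\cup F_2'$ with both parts acyclic. By Lemma~\ref{NestedSG}, $G_n-w\cong H_{n-1}$ and $\deg_{G_n}(w)=2$. Write $e,f$ for the two edges at $w$. Put $F_1=F_1'\cup\{e\}$ and $F_2=F_2'\cup\{f\}$, transported through the isomorphism. Each new edge joins the new vertex $w$ to a vertex already present. In its own colour class $w$ has degree exactly $1$, so $w$ is a leaf there and cannot lie on a cycle. Hence $F_1$ and $F_2$ are still forests, which completes the induction. In fact putting both edges into the same $F_i$ would also work, since a path through a new vertex creates no cycle in $F_i$ unless its ends are already connected in $F_i$; splitting the two edges avoids even that check.

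The main obstacle is not the extension step, which is trivial, but making sure the isomorphism of Lemma~\ref{NestedSG} really identifies the edge set of $G_n-w$ with that of $H_{n-1}$. Concretely, this needs the colouring $c$ of $G$ to restrict to the colouring $c$ of $H$ under the relabelling. Lemma~\ref{NestedSG} asserts exactly this, so I will invoke it as stated.

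A more robust alternative that avoids the induction entirely uses only degrees. Every subgraph of $G_n$ is planar and bipartite, since each edge of $G_n$ joins a colour-$1$ vertex to a colour-$2$ vertex. A bipartite planar graph on $k\ge 3$ vertices has at most $2k-4$ edges. By Nash-Williams' arboricity theorem, $E(G_n)$ then decomposes into two forests. I would mention this as the fallback argument if the induction bookkeeping becomes delicate.
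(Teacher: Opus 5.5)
Your inductive argument is the paper's own proof: peel off the degree-$2$ vertex given by Lemma~\ref{NestedSG}, send its two edges to different forests, and stop at $G_4\cong C_4$. The point you flagged as the main obstacle is where it breaks, because Lemma~\ref{NestedSG} is false. The colouring $c$ of the order-$n$ graph does not restrict to the colouring $c$ of the order-$(n-1)$ graph.
\begin{itemize}
\item For $n=6$ the colours of $v_1,\dots,v_6$ are $1,1,2,1,2,2$, and $G_6-v_4\cong K_{2,3}$ has $6$ edges. For order $5$ the colours of $u_1,\dots,u_5$ are $1,1,1,2,2$, and $H_5$ has only $5$ edges. Here $u_1u_2u_3$ is even a monochromatic triangle, so Lemma~\ref{VertexCol} fails as well.
\item The same happens in the paper's running example. $G_{12}-v_7$ has $18$ edges while $H_{11}$ has $17$, since $u_6$ gets colour $1$ in the order-$11$ graph but its counterpart $v_6$ gets colour $2$ in the order-$12$ graph.
\item In general the two colourings disagree at $v_{n/2}$ whenever $n\equiv 0\pmod 6$.
\end{itemize}
So ``transport through the isomorphism'' is not available. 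Your inductive step, like the paper's peeling algorithm, is invalid at $n=6$ and at $n=12$, and the colour bookkeeping breaks again at every $n\equiv 0 \pmod 6$.

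Your fallback, by contrast, is a complete and correct proof and should be your main argument. Every edge of $G_n$ joins the two colour classes by construction. So every subgraph on $k\ge 3$ vertices is a simple bipartite planar graph with at most $2k-4\le 2(k-1)$ edges, and the bound $2(k-1)$ is trivial for $k\le 2$. Nash-Williams' theorem then gives the two forests.

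Compared with the paper's route, this uses nothing about $c$ beyond the fact that $G_n$ is obtained by deleting monochromatic edges. It is therefore unaffected both by the index bookkeeping and by the failures of Lemma~\ref{VertexCol}. What it gives up is an explicit peeling decomposition. That cannot be recovered just by searching for degree-$2$ vertices, since bipartite planar graphs such as the cube $Q_3$ can have minimum degree $3$.
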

\begin{proof}
  Let $G_n$ be the graph constructed from $G$ with vertex set
  $V(G_n) = V(G) = \{v_1,v_2, \ldots, v_n\}$. From
  Lemma~\ref{NestedSG}, there exists a vertex $v \in V(G_n)$ such that
  deg$(v) = 2$, in particular, if $n$ is odd, then one can take
  $v = v_{\lceil\frac{n}{2}\rceil}$ and if $n$ is even, then
  $v = v_{\lceil\frac{n+1}{2}\rceil}$ has degree~$2$.  To decompose the edges of
  $G_n$ as required, we consider two copies $F_1$ and $F_2$ of the
  empty graph $nK_1$, with the vertices of each labelled as in
  $G_n$. Consider the graph $G_n$. We now apply the following
  algorithm:

  Place one of the two edges incident with $v$ (in $G_n$), between the corresponding vertices in $F_1$ and the other edge between the corresponding vertices in $F_2$. \\
  Remove the vertex $v$ from $G_n$. Note that, by Lemma~\ref{NestedSG}, the resulting graph $G_n - v$ is isomorphic to a graph $G_{n-1}$ where $G_{n-1}$ contains a vertex $u$ such that deg$(u) = 2$. \\
  Continue with the above algorithm until the resulting graph is isomorphic to a cycle $G_4 \cong C_4$ of length four. \\
  Let $x$ denote any vertex in $G_4$. Place one of the two edges incident with $x$ (in $G_4$) between the corresponding vertices in $F_1$ and the other edge between the corresponding vertices in $F_2$. \\
  Remove $x$ from $G_4$. \\
  Finally place one of the two remaining edges in the resulting
  $G_2$ $(\cong P_3)$ between the corresponding vertices in $F_1$ and
  the other edge between the corresponding vertices in $F_2$.

  By the construction of $F_1$ and $F_2$ above it is easy to see that
  the graphs induced by both $F_1$ and $F_2$ are acyclic, since once
  the edge incident with a particular vertex $v$, in some graph $G_i$
  ($2 \leq i \leq n$), such that deg$(v) = 2$, is added to $F_j$,
  ($j \in \{1,2\}$), the vertex $v$ is never again encountered in
  $F_j$ as it is deleted from $G_i$.

  Using the above, we are able to decompose the edge set of $G_n$ into
  two sets such that the graph induced by edges in $F_1$ and the graph
  induced by edges in $F_2$ both lie in ${\cal D}_1$.
\end{proof}

\begin{figure}
  \centering
  \psfrag{u}{}\psfrag{v}{}
  
  \includegraphics[height=.3\textheight]{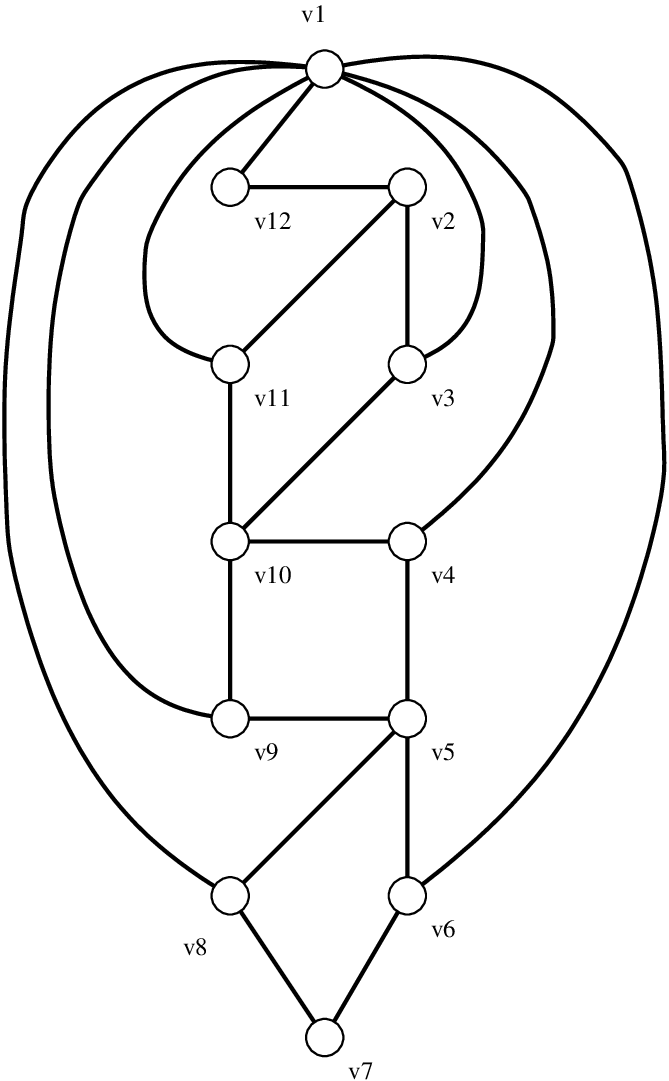}
  \qquad
  \includegraphics[height=.3\textheight]{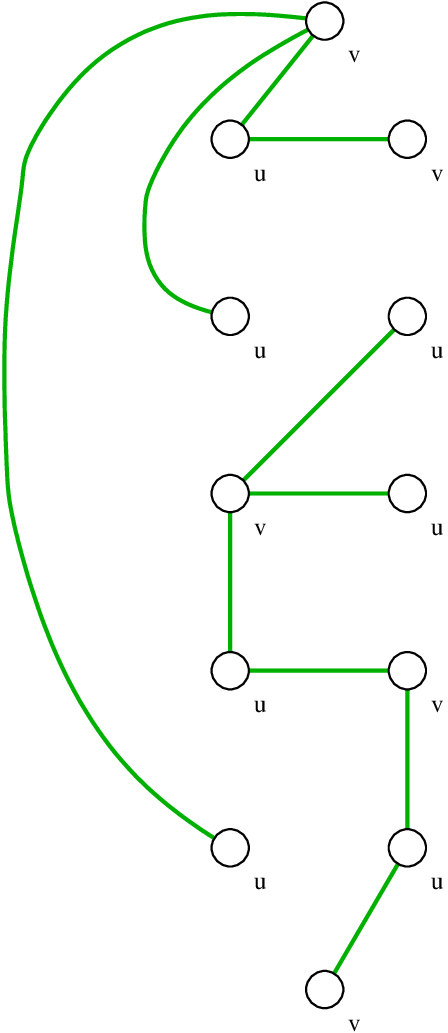}
  \qquad
  \includegraphics[height=.3\textheight]{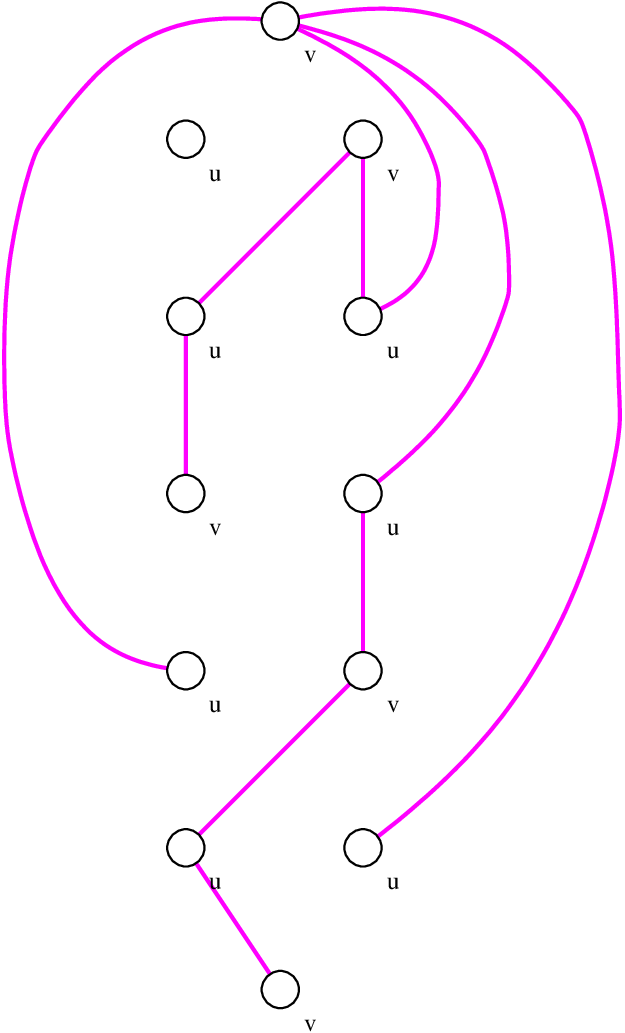}
  \caption{The decomposition of $G_{12}$ into two forests.}\label{fig4}
\end{figure}

\begin{theorem}\label{MainMH}
  For all integers $n \geq 4$, every graph $G$ in $\mathsf{MH}$ with order $n$
  satisfies $\chi''_{{\cal D}_1,{\cal D}_1}(G) = 4$.
\end{theorem}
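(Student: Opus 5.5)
We prove the two inequalities $\chi''_{{\cal D}_1,{\cal D}_1}(G)\le 4$ and $\chi''_{{\cal D}_1,{\cal D}_1}(G)\ge 4$ separately.

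\textbf{Upper bound.} We use four colours $\{1,2,3,4\}$.
\begin{enumerate}
\item Colour the vertices of $G$ by the colouring $c$ of Lemma~\ref{VertexCol}. This uses colours $1,2$, and each vertex colour class induces a forest.
\item Every edge of $G$ joining two vertices of the same $c$-colour gets colour $\{1,2\}\setminus\{\text{that colour}\}$. An edge inside $\langle 1\rangle$ gets colour $2$, and an edge inside $\langle 2\rangle$ gets colour $1$. Neither endpoint of such an edge has the edge's colour, so incidence is respected.
\item The edge set of colour $2$ is the edge set of the induced subgraph on $\langle 1\rangle$, which is a forest by Lemma~\ref{VertexCol}. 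The same holds for colour $1$ and $\langle 2\rangle$.
\item The remaining edges are exactly the edges of $G_n$. By Lemma~\ref{Decomp2Forests} they split into two forests $F_1,F_2$. Colour the edges of $F_1$ with $3$ and those of $F_2$ with $4$. No vertex has colour $3$ or $4$, so incidence is automatic.
\end{enumerate}
We still have to check that the colour-$1$ class of vertices, together with the colour-$1$ edges, is fine. This is immediate, because the definition only requires separately that the vertices of colour $i$ induce a forest and the edges of colour $i$ form a forest. Both hold, so this is a $({\cal D}_1,{\cal D}_1)$-total colouring with $4$ colours.

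\textbf{Lower bound.} Suppose a $3$-colouring exists. Our first attempt is counting via the vertex $v_1$, which is universal in $G$ (as noted in the proof of Lemma~\ref{VertexCol}). Say $v_1$ has colour $1$. Then all $n-1$ edges at $v_1$ have colours $2$ or $3$, and every other vertex $v_i$ is incident with the edge $v_1v_i$.

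The cleanest route is counting over all colour classes. Let $V_i$ and $E_i$ be the vertices and edges of colour $i$. Since a forest on $|V_i|$ vertices has at most $|V_i|-1$ edges, each induced subgraph $G[V_i]$ has at most $|V_i|-1$ edges. An edge of colour $i$ has no endpoint in $V_i$, so $E_i$ is a forest on at most $n-|V_i|$ vertices, and hence $|E_i|\le n-|V_i|-1$ whenever $V_i\neq V(G)$.

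Summing over the three colours gives $3n-6=\sum_i|E_i|\le 3n-\sum_i|V_i|-3=2n-3$. This requires $n\le 3$, a contradiction when $n\ge4$. If some $E_i$ is empty the bound on that colour only improves, so this case causes no trouble. If some $V_i$ is all of $V(G)$, then $G$ itself would have to be a forest, which is impossible. This shows $\chi''\ge4$.

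The main obstacle is making sure the upper-bound colouring really covers every edge: each edge must be either monochromatic under $c$ or in $G_n$, and this is just the definition of $G_n$. The lower bound is routine counting.
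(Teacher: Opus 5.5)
Your proof is correct. The upper bound is the paper's construction verbatim: the colouring $c$ of Lemma~\ref{VertexCol}, edges inside $\langle 1\rangle$ and $\langle 2\rangle$ coloured $2$ and $1$, and the two forests of Lemma~\ref{Decomp2Forests} coloured $3$ and $4$.

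Your lower bound takes a genuinely different and cleaner route. The paper observes that $K_4\subseteq G$, proves $\chi''_{{\cal D}_1,{\cal D}_1}(K_4)\ge 4$ by a case analysis over the possible vertex colourings of $K_4$, and then appeals to monotonicity under subgraphs. You use only the incidence rule and the edge condition. The edges of colour $i$ form a forest on $V(G)\setminus V_i$, so $|E_i|\le n-|V_i|-1$. Summing over three colours gives $3n-6\le 2n-3$, which is impossible for $n\ge 4$, and you handle the degenerate cases $E_i=\emptyset$ and $V_i=V(G)$ correctly. This needs no case distinction, no subgraph monotonicity, and nothing about $\mathsf{MH}$ beyond the edge count of Lemma~\ref{MaxHam}. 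It actually shows that every graph of order $n\ge 2$ with more than $2n-3$ edges needs four colours, in particular every maximal planar graph of order at least $4$.

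The paper's local argument gives that any graph containing $K_4$, however sparse, needs four colours. Your count recovers this too: apply it to $K_4$ itself ($6>2\cdot 4-3$) and use monotonicity. The same works for the wheel of order $5$ used in Theorem~\ref{MainTriGrid} ($8>2\cdot 5-3$).

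The aside that $G[V_i]$ has at most $|V_i|-1$ edges, which fails for $V_i=\emptyset$, and the abandoned first attempt via $v_1$ are never used and can be dropped.
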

\begin{proof}
  Let $G$ be any graph in $\mathsf{MH}$ with order $n \geq 4$ and vertex set
  $V(G) = \{v_1,v_2, \ldots, v_n\}$. Colour the vertices of $G$ with
  the colouring $c: V(G) \imp \{1,2\}$ of $V(G)$ such that
  $c(v_1) = 1$ and for all $i \in S_2$ such that
  $2 \leq i \leq \lceil\frac{n}{2}\rceil$, $c(v_i) = c(v_{n-i}) = 1$
  and otherwise $c(v_i) = 2$. By Lemma~\ref{VertexCol}, the subgraphs
  induced by the two colour classes $\langle 1\rangle$ and $\langle 2\rangle$ of $c$ are
  acyclic. Consider the subgraph $G_n$ of $G$. Applying the method used in the proof of
  Lemma~\ref{Decomp2Forests}, colour the edges of $G_n$ induced by
  $F_1$ with colour 3 and the edges induced by $F_2$ with 
 colour 4. Finally, for all edges $v_iv_j \in G$ such that
  $v_iv_j \not\in G_n$, ($1 \leq i,j \leq n$) if $c(v_i)=c(v_j) = 1$,
  then colour the edge $v_iv_j$ with colour 2, and for all edges
  $v_iv_j \in G$ such that $v_iv_j \not\in G_n$, ($1 \leq i,j \leq n$)
  if $c(v_i)=c(v_j) = 2$, colour the edge $v_iv_j$ with colour 1.

  The total colouring above provides a $\cd_1$ colouring of the
  vertices of $G$ as well as a $\cd_1$ colouring of the edges of $G$
  such that incident vertices and edges receive distinct colours and
  therefore \\ $\chi''_{{\cal D}_1,{\cal D}_1}(G) \leq 4$.

  Furthermore, the complete graph $K_4$ is the smallest graph in $\mathsf{MH}$
  and clearly $K_4$ is a subgraph of all
  graphs in $\mathsf{MH}$. However, if we try to find a total colouring of
  $K_4$, then we see that, firstly, we need at least two colours for
  the vertices and since $K_4$ is a triangulation, at most two
  vertices may have the same colour. Thus if we use exactly two
  colours for the vertices, then there are two vertices of each
  colour. However, then since $K_4$ is complete, we can always find a
  4-cycle $C: u_1,u_2,u_3,u_4,u_1$ in the graph with vertices of
  alternating colours. However, then none of the edges between the
  vertices on $C$ may receive colour 1 or 2. Furthermore, since $C$ is
  a cycle, we will need an additional two colours to colour its
  edges. However, if we use three colours for the vertices of $K_4$,
  then two vertices must have the same colour, say 1. Call
  the other two vertices $u$ and $v$ and call their colours $2$ and $3$
  respectively. Then 
  the only edge that may be coloured with $1$ is $uv$. Now the
  remaining five edges are the following: the edge between the two vertices
  of colour $1$, two vertices between $u$ and the vertices with colour $1$
  and two between $v$ and the vertices with colour $1$. They form two
  cycles of length $3$ sharing the edge between the vertices of colour
  $1$. Trying to use only $3$ colours, the edges on $u$ must be
  coloured with colour $3$ and the ones on $v$ with colour $2$. For
  the edge between the two vertices of colour $1$ either of the
  colours $2$ or $3$ will yield a monochromatic cycle of length $3$ so
  we will need a fourth colour. Therefore
  $\chi''_{{\cal D}_1,{\cal D}_1}(K_4) \geq 4$. Since $K_4$ is a
  subgraph of every graph in $\mathsf{MH}$, it follows that
  $\chi''_{{\cal D}_1,{\cal D}_1}(G) = 4$.
\end{proof}

The following corollary to Theorem~\ref{MainMH} follows from
\cite{bokemi2011}, since for a positive integer $k$, a subgraph $H$ of
a graph $G$ and additive hereditary properties ${\cal P}$ and
${\cal Q}$, if $\chi''_{{\cal P},{\cal Q}}(G) \leq k$, then
$\chi''_{{\cal P},{\cal Q}}(H) \leq k$.

\begin{cor}\label{MainPlanar}
  Every (planar) subgraph of a graph $G$ in $\mathsf{MH}$ satisfies \\
  $\chi''_{{\cal D}_1,{\cal D}_1}(G) \leq 4$.
\end{cor}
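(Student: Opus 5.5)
The corollary should follow from Theorem~\ref{MainMH} by restricting a colouring of $G$ to the subgraph. Let $H$ be any subgraph of $G\in\mathsf{MH}$ (spanning or not), and fix a $({\cal D}_1,{\cal D}_1)$-total colouring $\varphi$ of $G$ with four colours, which exists by Theorem~\ref{MainMH}. Colour every vertex and edge of $H$ with the colour $\varphi$ gives it in $G$. It remains to check the three defining conditions for $\varphi|_H$.

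First, incidence. If a vertex $x$ and an edge $e$ are incident in $H$, they are incident in $G$, so $\varphi(x)\neq\varphi(e)$.

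Second, vertex classes. For each colour $i$, the vertices of $H$ coloured $i$ form a subset of the class $\varphi^{-1}(i)\cap V(G)$. Hence the subgraph of $H$ they induce is a subgraph of the subgraph of $G$ induced by that class. That subgraph of $G$ is a forest, i.e.\ lies in ${\cal D}_1$. Since ${\cal D}_1$ is hereditary (every subgraph of a forest is a forest; equivalently, any subgraph of a subgraph of $F$ is a subgraph of $F$, so it has minimum degree at most $1$), the induced subgraph in $H$ lies in ${\cal D}_1$.

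Third, edge classes. The edges of $H$ coloured $i$ form a subset of the edges of $G$ coloured $i$, so they span a subgraph of a forest. This subgraph again lies in ${\cal D}_1$.

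Hence $\chi''_{{\cal D}_1,{\cal D}_1}(H)\le 4$. Planarity of $H$ is automatic, since $G$ is planar by Lemma~\ref{MaxHam}.

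There is no real obstacle; the only point to state carefully is that an induced subgraph of $H$ need not be induced in $G$ but is still a subgraph of the corresponding induced subgraph of $G$. Heredity of ${\cal D}_1$ therefore suffices. This is exactly the general monotonicity fact from \cite{bokemi2011} quoted before the corollary. Note also that the statement's conclusion should be read as a bound for the subgraph $H$, not for $G$. Equality with $4$ need not hold, since for example a forest subgraph needs fewer colours.
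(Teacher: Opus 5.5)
Your proof is correct and takes the paper's approach: the paper gets the corollary from Theorem~\ref{MainMH} by quoting from \cite{bokemi2011} that, for hereditary $\mathcal{P},\mathcal{Q}$, a $(\mathcal{P},\mathcal{Q})$-total colouring of $G$ with $k$ colours restricts to one of any subgraph, and you have simply verified that restriction explicitly. You are also right that the conclusion should be read as $\chi''_{\mathcal{D}_1,\mathcal{D}_1}(H)\le 4$ for the subgraph $H$, not for $G$.
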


\section{Generalized total colourings for triangulated square grids} 

We now construct another family of hamiltonian maximal planar graphs.

Let $k \geq 2$ and $l \geq 2$ denote positive integers and $P_k$ and
$P_l$ denote paths of order $k$ and $l$, respectively. The cartesian
product of $P_k$ and $P_l$ is called a $k\times l$ \emph{grid} or a
$k\times l$ \emph{lattice graph} and has order $kl$.

Let $G$ be a $n\times n$ grid for some positive integer $n \geq 3$
and let $V(G)$ denote the vertex set of $G$. To label the vertices of
a square grid $G$, we will use notation similar to that used for entries of a
matrix. Thus $v_{ij}$ will denote the vertex in row $i$ and column $j$
of our grid. We will define a {\em triangulated grid} of $G$ to be the
graph $Gt$ obtained from $G$ by adding a diagonal edge from the top
right corner to the bottom left corner of every induced subgraph $H$
of $G$ such that $H$ is isomorphic to a cycle of length four, and we
will add an edge between the vertex $v_{11}$ and every vertex on the
outer face of $V(G)$.

More formally, if $V(G) = \{v_{11},v_{12}, \ldots,v_{nn}\}$, then $Gt$
is the graph obtained by adding all edges $v_{ij}v_{(i+1)(j-1)}$ (for
all $1 \leq i,j \leq n$) and all edges $v_{11}v_{1j}$, $v_{11}v_{i1}$,
$v_{11}v_{nj}$ and $v_{11}v_{in}$ for all $1 \leq i \leq n-1$ and
$2\leq j\leq n$ to
$G$. It is easy to see that the graph $Gt$ is planar. Furthermore, the
graph $Gt$ is maximal planar, since it has order $n^2$ and size
$2(n-1)n + (n-1)^2 + (4n - 4 - 3) = 3n^2 - 6$ (adding edges on the
grid $G$ to the diagonal edges and adding this to the order of the
neighbourhood of $v_{11}$ without the edges $v_{12}$ and  $v_{21}$).

 \psfrag{v31}{$v_{31}$}\psfrag{v32}{$v_{32}$}\psfrag{v33}{$v_{33}$}\psfrag{v34}{$v_{34}$}
  \psfrag{v41}{$v_{41}$}\psfrag{v42}{$v_{42}$}\psfrag{v43}{$v_{43}$}\psfrag{v44}{$v_{44}$}
  \psfrag{v21}{$v_{21}$}\psfrag{v22}{$v_{22}$}\psfrag{v23}{$v_{23}$}\psfrag{v24}{$v_{24}$}
  \psfrag{v13}{$v_{13}$}\psfrag{v11}{$v_{11}$}\psfrag{v12}{$v_{12}$}\psfrag{v14}{$v_{14}$}

\begin{figure}[hbtp]
  \centering
  \includegraphics[width=.6\textwidth]{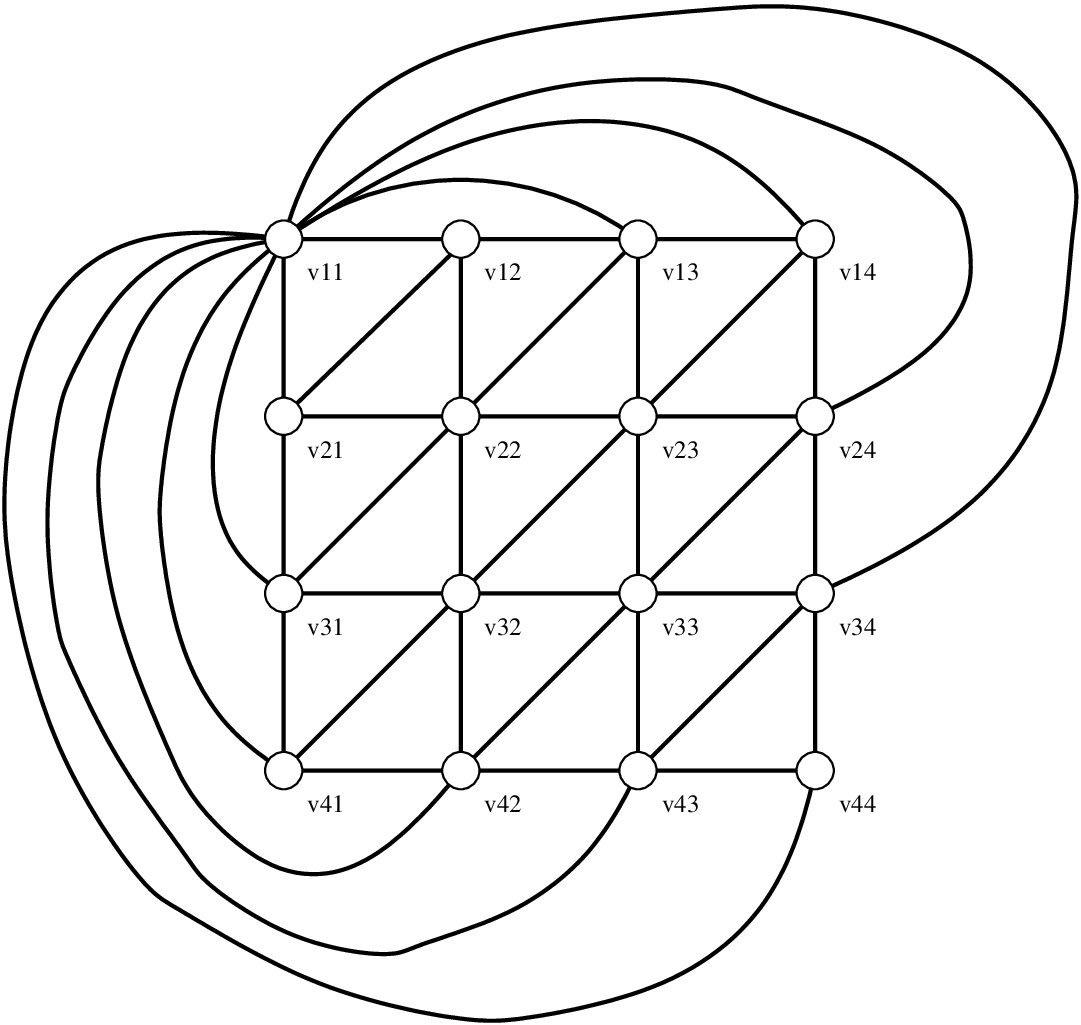}
  \caption{The triangulated $4\times 4$ grid.}\label{fig5}
\end{figure}

\begin{theorem}\label{MainTriGrid}
  Let $G$ be a $n\times n$ grid for any integer $n \geq 3$. Then
  the triangulated grid $Gt$ satisfies,
  $\chi''_{{\cal D}_1,{\cal D}_1}(Gt) = 4$.
\end{theorem}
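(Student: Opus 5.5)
The plan mirrors the proof of Theorem~\ref{MainMH}. For the lower bound, note that $Gt$ contains a $K_4$: the vertices $v_{11},v_{12},v_{21},v_{22}$ are pairwise adjacent. Indeed, $v_{11}v_{12}$, $v_{11}v_{21}$, $v_{12}v_{22}$ and $v_{21}v_{22}$ are grid edges, $v_{12}v_{21}$ is the added diagonal, and $v_{11}v_{22}$ is present because $v_{11}$ is joined to the whole outer face when $n=3$. For $n>3$ this last edge is not available, so instead we take $v_{11}$ together with a boundary triangle such as $v_{12},v_{13},v_{22}$ if it lies in $N(v_{11})$. Failing that, we use any $K_4$ formed by $v_{11}$ and a triangular face of the boundary. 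If no such $K_4$ exists, we redo the $K_4$ argument directly on a wheel-like subgraph around $v_{11}$. The argument in the proof of Theorem~\ref{MainMH} then gives $\chi''_{{\cal D}_1,{\cal D}_1}(Gt)\ge 4$, since $\chi''$ is monotone under subgraphs.

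For the upper bound we follow the same three steps as for $\mathsf{MH}$. First, find a $\cd_1$ vertex colouring $c:V(Gt)\to\{1,2\}$. Second, delete the monochromatic edges to obtain the bipartite graph $Gt'$. Third, decompose $E(Gt')$ into two forests, coloured $3$ and $4$. Finally, edges inside class $\langle 1\rangle$ receive colour $2$ and edges inside class $\langle 2\rangle$ receive colour $1$; these are forests because they are the induced subgraphs of the colour classes.

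For the vertex colouring, put $c(v_{11})=1$. Since $v_{11}$ is adjacent to every boundary vertex, the boundary vertices of colour $1$ must form, with $v_{11}$, a forest. The simplest choice is to make the colour-$1$ vertices other than $v_{11}$ an independent set $I$ in which each vertex has at most one neighbour among the boundary vertices of $I$ together with $v_{11}$. Colour class $2$ must then induce a forest, so $I$ must meet every triangle and hit every cycle of $Gt-v_{11}$. A natural candidate, analogous to the $S_2$ pattern of Lemma~\ref{VertexCol}, is
\[
I=\{v_{ij} : i+2j\equiv 0 \pmod 3\}.
\]
This set is independent in the triangulated grid, since the neighbour offsets $(0,\pm1)$, $(\pm1,0)$ and $\pm(1,-1)$ change $i+2j$ by $\pm2$, $\pm1$, $\pm1 \pmod 3$. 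It also meets every triangular face, so class $2$ is triangle-free, and the lattice classes $1$ and $2$ mod $3$ together form a union of paths. The boundary vertices of $I$ are then adjusted so that there are no two colour-$1$ boundary vertices whose connection through $v_{11}$ closes a cycle. In fact this cannot happen: $I$ is independent, so the colour-$1$ subgraph is a star centred at $v_{11}$ plus isolated vertices. Hence class $1$ is automatically a forest, and the real check is that class $2$ is acyclic. Near the boundary, class $2$ must be verified to be a disjoint union of paths by tracing the residue pattern.

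The two-forest decomposition of $Gt'$ is the main obstacle. Here $Gt'$ is planar and bipartite, so $|E|\le 2|V|-4$ on every subgraph, and by Nash-Williams its arboricity is at most $2$. That settles it cleanly, rather than relying on a peeling argument as in Lemma~\ref{Decomp2Forests}. If a constructive proof is preferred, we instead peel vertices of degree at most $2$ in $Gt'$ (one exists by the same edge bound) and assign their edges to different forests, exactly as in the proof of Lemma~\ref{Decomp2Forests}. Finally, the incidence conditions are checked: vertices carry colours $1$ and $2$, edges between different classes carry $3$ and $4$, and an edge within class $i$ carries the other vertex colour, so no edge shares a colour with its endpoints.
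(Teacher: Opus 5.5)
\textbf{Upper bound.} Your framework is the paper's: partition $V(Gt)$ into two induced forests, give each monochromatic edge the opposite vertex colour, and split the remaining bipartite planar graph into two forests. Using Nash-Williams for the last step is valid and cleaner than the paper's explicit $F_1,F_2$. Your peeling alternative is not valid: $|E|\le 2|V|-4$ only guarantees a vertex of degree at most $3$, as the cube $Q_3$ shows.

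The real content of the upper bound is the vertex partition, and yours fails. With $I=\{v_{ij}: i+2j\equiv 0 \pmod 3\}$, the other two residue classes do not form a union of paths. Removing one class of the proper $3$-colouring of the triangular lattice leaves a honeycomb. Concretely, for every $n\ge 3$ the interior vertex $v_{22}$ lies in $I$. Its six neighbours $v_{12},v_{13},v_{23},v_{32},v_{31},v_{21}$ all have nonzero residue and form a $6$-cycle in class $2$; for $n=3$, class $2$ is exactly this hexagon. The same happens around every interior vertex of $I$.

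Meeting every triangle does not make the complement acyclic, and no boundary adjustment can rescue the idea. Suppose $Y=V\setminus X$ induces a forest. Count edges in the grid-plus-diagonals graph, which has $3n^2-4n+1$ edges and maximum degree $6$. This gives $3n^2-4n+1-6|X|\le n^2-|X|-1$, i.e.\ $|X|\ge 2(n-1)^2/5$. An independent set of that graph has at most $(n-1)^2/3+4n-4$ vertices: each of the $2(n-1)^2$ triangular faces contains at most one of its vertices, and each interior vertex lies on six faces. For large $n$ this is smaller than $2(n-1)^2/5$. So class $1$ must itself contain many edges. This is why the paper uses a checkerboard pattern on $i\le j$ together with a shifted pattern on $j\le i-3$. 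That makes class $1$ a generalized star centred at $v_{11}$ and class $2$ a generalized caterpillar.

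\textbf{Lower bound.} This also fails as written. $v_{22}$ is interior for every $n\ge 3$ (for $n=3$ it is the only interior vertex), so $v_{11}v_{22}\notin E(Gt)$. Hence neither $\{v_{11},v_{12},v_{21},v_{22}\}$ nor $\{v_{11},v_{12},v_{13},v_{22}\}$ is a $K_4$. Your fallbacks are never carried out, and the $K_4$ argument does not transfer to a wheel without a fresh case analysis.

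The repair is easy. The corner $v_{nn}$ has exactly the neighbours $v_{(n-1)n}$, $v_{n(n-1)}$ and $v_{11}$. The first two are joined by the diagonal $v_{(n-1)n}v_{n(n-1)}$, and $v_{11}$ is adjacent to both. So $\{v_{11},v_{(n-1)n},v_{n(n-1)},v_{nn}\}$ induces $K_4$, and the argument of Theorem~\ref{MainMH} applies verbatim. This is simpler than the paper's route, which proves $\chi''_{{\cal D}_1,{\cal D}_1}(W)\ge 4$ by a case analysis for the wheel $W$ on $v_{11},v_{1(n-1)},v_{1n},v_{2(n-1)},v_{2n}$ centred at $v_{1n}$.
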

\begin{proof}
  First we will show that $\chi''_{{\cal D}_1,{\cal D}_1}(Gt) \leq
  4$. Similar to the proof of Theorem~\ref{MainMH}, in order to show
  this upper bound, 
  we will first define a vertex colouring of a graph $Gt$ that
  will use two colours and we will show that the graphs induced by the
  two colour classes are both acyclic. Then we will show that the graph
  $H$ resulting from $Gt$ by removing all edges whose end vertices lie
  in the same colour class can be edge-partitioned into two spanning
  forests $F_1$ and $F_2$. From this we will be able to provide the
  necessary total colouring.

  Take any graph isomorphic to a triangulated grid $Gt$.

  Consider the vertex colouring $c: V(Gt) \imp \{1,2\}$ defined by

  $c(v_{ij}) = \left \{ \begin{array}{ll}
                          1 & \mbox{ if } 1 \leq i \leq j \mbox{ and } i+j \mbox{ is even, or} \\
                            & \mbox{ if } 1 \leq j \leq i-3, i \geq 4 \mbox{ and } i+j \mbox{ is odd, and } \\
                          2 & \mbox{ otherwise. }\end{array} \right.$

                      Let $\langle 1\rangle$ and $\langle 2\rangle$ denote the colour classes of
                      $c$, then the graph induced by the vertices in
                      $V(Gt)$ that lie in $\langle 1\rangle$, is acyclic and the
                      same holds for the graph induced by the vertices
                      that lie in $\langle 2\rangle$: this is easily seen, since by
                      definitions of the edge set $E(Gt)$ of $Gt$ and
                      the colouring $c$, the graph (say $G_2$) induced
                      on the vertices in $\langle 2\rangle$ is a
                      generalized caterpillar with, as spine, the path $v_{12},v_{21},v_{31}, \ldots,v_{(n-1)n}$ of length
                      $2n-2$ and legs of varying lengths $k$ such that
                      $0 \leq k \leq \lceil \frac{n}{2}
                      \rceil$. Furthermore, the graph (say $G_1$)
                      induced on the vertices of $\langle 1\rangle$ is
                      a generalized star with center $v_{11}$ and with
                      $2n-4$ arms of varying lengths $k$ such that 
                      $1 \leq k \leq \lceil \frac{n}{2} \rceil$.

                      Now let $H$ denote the spanning subgraph of $Gt$
                      that results when we remove the edge sets of
                      $G_1$ and $G_2$ from $Gt$.

                      Let $F_1$ denote the subgraph of $H$ induced by
                      the edges
                      $\{v_{ij}v_{kl} \in E(H) \ | \ i=k-1, j=l,
                      c(v_{ij}) = 1 \mbox{ and } c(v_{kl})=2\} \cup
                      \{v_{ij}v_{kl} \in E(H) \ | \ i=k, j=l-1,
                      c(v_{ij}) = 1 \mbox{ and } c(v_{kl})=2\} \cup
                      \{v_{ij}v_{kl} \in E(H) \ | \ i=k-1, j=l+1,
                      c(v_{ij}) = 1 \mbox{ and } c(v_{kl})=2\} \cup
                      \{v_{11}v_{kl} \in E(H) \ | \ k=1 \mbox{ and }
                      c(v_{kl}) = 2, \mbox{ or } l =n \mbox{ and }
                      c(v_{kl}) = 2\}$. It is easily seen that $F_1$
                      is a forest (a generalized star with $v_{11}$ its central vertex and with $n$ arms, such that every arm is a tree).
        
                      Let $F_2$ denote the subgraph of $H$ induced by
                      the edges
                      $\{v_{ij}v_{kl} \in E(H) \ | \ i=k-1, j=l,
                      c(v_{ij}) = 2 \mbox{ and } c(v_{kl})=1\} \cup
                      \{v_{ij}v_{kl} \in E(H) \ | \ i=k, j=l-1,
                      c(v_{ij}) = 2 \mbox{ and } c(v_{kl})=1\} \cup
                      \{v_{ij}v_{kl} \in E(H) \ | \ i=k-1, j=l+1,
                      c(v_{ij}) = 2 \mbox{ and } c(v_{kl})=1\} \cup
                      \{v_{11}v_{kl} \in E(H) \ | \ l=1 \mbox{ and }
                      c(v_{kl}) = 2, \mbox{ or } k =n \mbox{ and }
                      c(v_{kl}) = 2\}$. It is easily seen that $F_2$
                      is a forest (the union of a generalized star
                      with $n-1$ arms (such that the arms are paths)
                      together with a union of paths).

                      Therefore, to show that
                      $\chi''_{{\cal D}_1,{\cal D}_1}(Gt) \leq 4$, we
                      start with a graph $Gt$ and we colour its
                      vertices with the colouring $c$ defined above
                      to obtain two colour classes $\langle 1\rangle$ and $\langle 2\rangle$.  To
                       the edges of $Gt$, first we colour the
                      edges of the graph $G_1$ (defined above) with
                      the colour 2 and we colour the edges of the graph
                      $G_2$ with the colour 1. Next we colour the edges
                      of the forest $F_1$ with a new colour 3 and
                      finally the edges of the forest $F_2$ with a new
                     colour  4. From our observations above, we have
                      that the subgraphs induced by vertices with the
                      same colour lie in ${\cal D}_1$, all incident
                      vertices and edges have distinct colours and the
                      graphs induced by edges with the same colour lie
                      in ${\cal D}_1$.  Thus we may conclude that
                      $\chi''_{{\cal D}_1,{\cal D}_1}(Gt) \leq 4$.

Now we will show that $\chi''_{{\cal D}_1,{\cal D}_1}(Gt) \geq 4$. Note that the induced subgraph $\langle v_{11},v_{1(n-1)},v_{1n},v_{2(n-1)},v_{2n} \rangle$ of any triangulated grid $G_t$ is isomorphic to a wheel, which we will denote by $W$, which has order 5 and central vertex $v_{1n}$.   
We will show that $\chi''_{{\cal D}_1,{\cal D}_1}(W) \geq 4$, and therefore assume (to the contrary) that $\chi''_{{\cal D}_1,{\cal D}_1}(W) \leq 3$. Since $W$ contains cycles, we need at least two colours with which to colour the vertex set of $W$. For the sake of simplicity, let's label the vertices of $W$ as $\{u_1,u_2,u_3,u_4,x\}$, where $x$ denotes the central vertex of $W$.
We begin by colouring the vertices in $W$ that lie on the outer 4-cycle (which we will denote by $C$), i.e., all vertices excepting the vertex $x$. Consider the total colouring $c: \{u_1,u_2,u_3,u_4,x\} \cup E(W) \rightarrow \{1,2,3\}$ of $V(W) \cup E(W)$. \\
{\bf Case 1:} Suppose that $c(u_1) = c(u_3) \neq c(u_2)= c(u_4)$. Then, since incident vertices and edges must receive distinct colours and $C$ is a cycle, we will need two new colours to colour $E(C)$. Therefore $4 \leq \chi''_{{\cal D}_1,{\cal D}_1}(C) \leq \chi''_{{\cal D}_1,{\cal D}_1}(W) \leq 3$ --- a contradiction. \\
{\bf Case 2:} Suppose now that $c(u_1)= c(u_4) \neq c(u_2) = c(u_3)$ and, without loss of generality, $c(u_1) = 1$ and $c(u_2)=2$. Then, in $W$, the vertex $x$ must satisfy $c(x) = 3$.   
In order to satisfy the restrictions imposed by the total colouring,
we know that $c(u_1x) = c(u_4x) = 2$ and $c(u_2x) = c(u_3x) =
1$. However, then $c(u_1u_4)=3$ and thus the only available colour
assignment for $u_2u_3$ is $c(u_2u_3)=3$ --- now all the edges of $C$
are coloured the same, a contradiction. \\ 
{\bf Case 3:} Finally, suppose that $c(u_1)=c(u_2)= c(u_3) \neq c(u_4)$ and (without loss of generality) that $c(u_1) = 1$ and $c(u_4) = 2$. Clearly, $c(x) \neq 1$. \\
If $c(x) = 2$, then $c(u_1x) = c(u_3x) = c(u_3u_4) = c(u_1u_4) = 3$ and thus the colour class $\langle 3 \rangle$ induces a cycle $u_1,x,u_3,u_4$ --- a contradiction. 
Thus $c(x) = 3$. However, this forces $c(u_1u_4) = c(u_3u_4) = 3$ and $c(u_2x)=c(u_3x) = 2$ and the latter colouring forces $c(u_2u_3)=3$. This makes it impossible for us to colour the edge $u_1u_2$ with one of the three colours since $c(u_1u_2) \neq 1$ as $c(u_1) = c(u_2)=1$; also $c(u_1u_2) = 3$ causes $\langle 3 \rangle$ to induce a cycle isomorpic to $C$ and finally $c(u_1u_2) = 2$ results in a triangle $u_1,u_2,x$ induced by $\langle 2\rangle$. \\
Therefore $\chi''_{{\cal D}_1,{\cal D}_1}(W) \geq 4$. Since every
triangulated square grid $Gt$ contains the wheel $W$, we may conclude that $\chi''_{{\cal D}_1,{\cal D}_1}(Gt) \geq 4$ from which the result follows. 
                   \end{proof}
\begin{figure}[hbtp]
  \centering
    \includegraphics[height=.20\textheight]{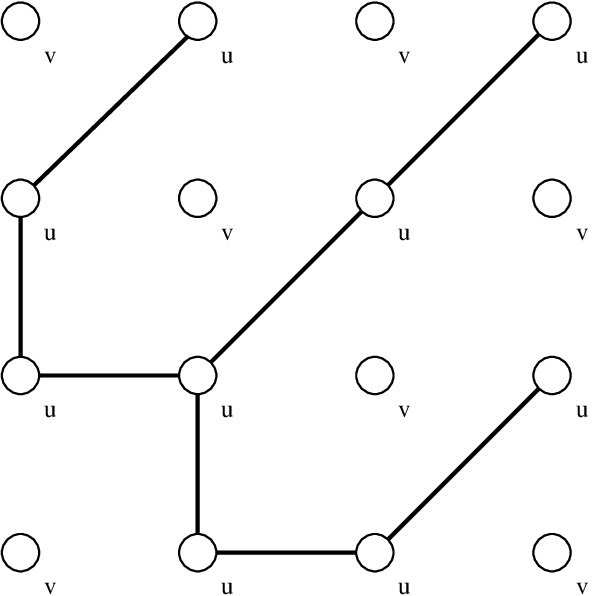}
    \qquad
    \includegraphics[height=.25\textheight]{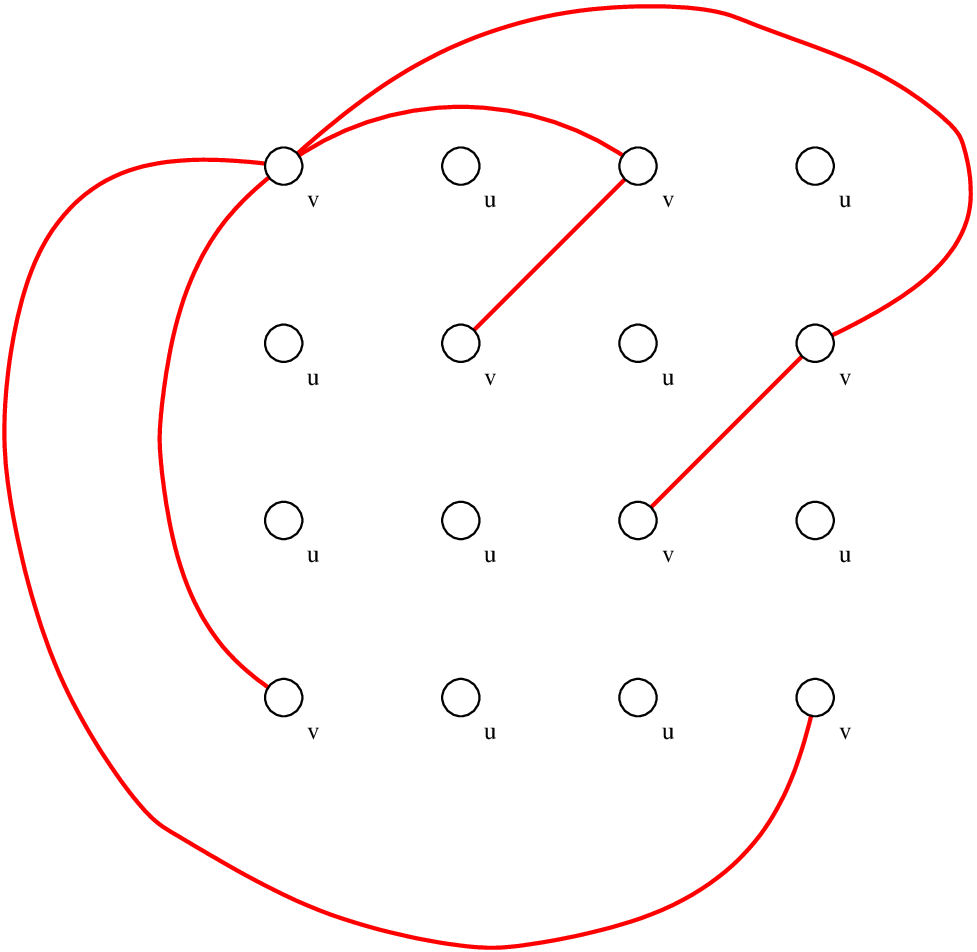}
  \caption{The vertex colouring and induced acyclic subgraphs in the
    triangulated $4\times 4$ grid.}\label{fig6}
\end{figure}

\begin{figure}[hbtp]
  \centering
  \psfrag{u}{}\psfrag{v}{}
  
  \includegraphics[height=.35\textheight]{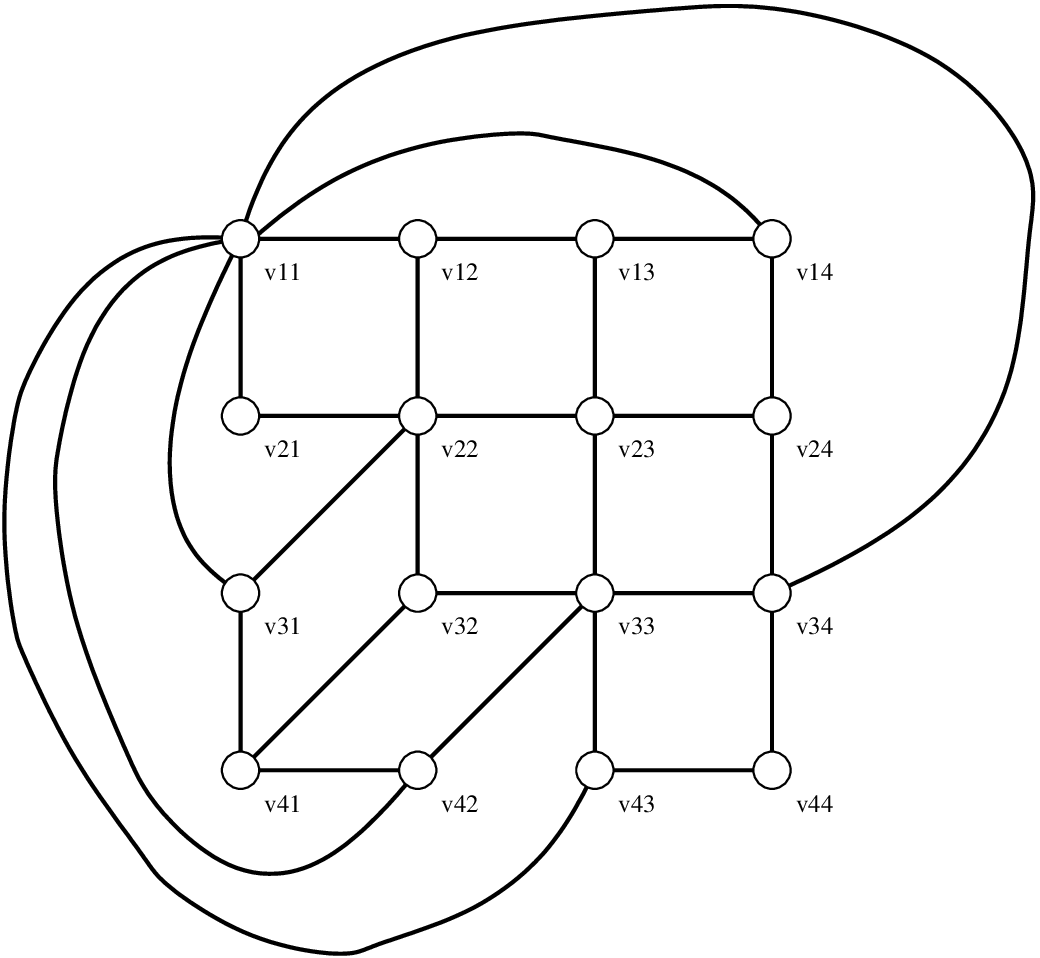}
  \\
  \includegraphics[height=.25\textheight]{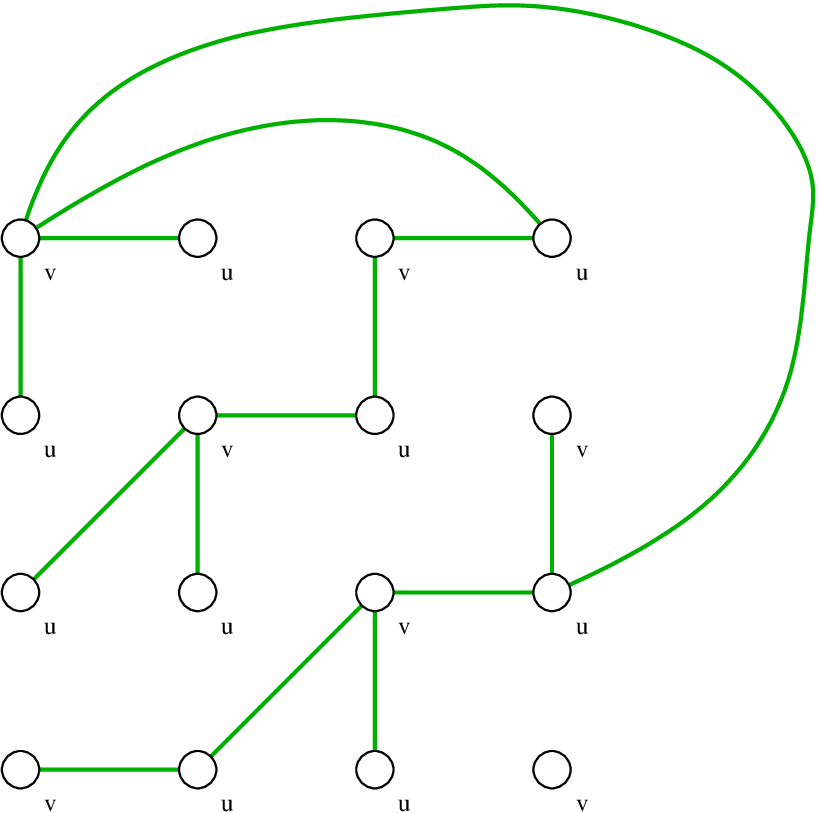}
  \qquad
  \includegraphics[height=.25\textheight]{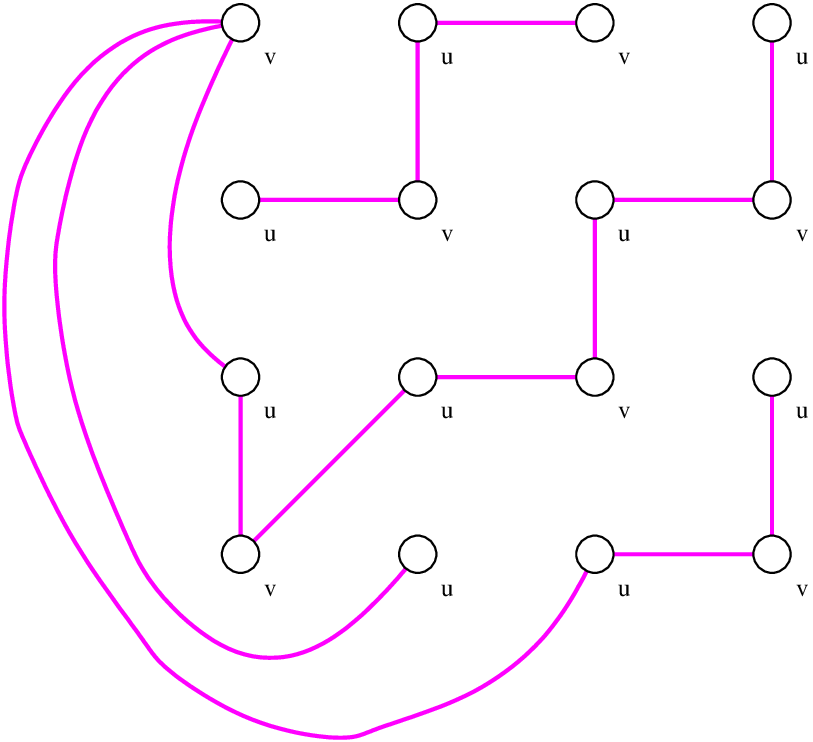}
  \caption{The decomposition of the triangulated $4\times 4$ grid into two forests.}\label{fig7}
\end{figure}

\begin{cor}\label{CorGrid}
  Let $G$ be a $n\times n$ grid for any integer $n \geq 3$ and let
  $Gt$ denote the triangulated grid of $G$. Then every (planar)
  subgraph of $Gt$ satisfies
  $\chi''_{{\cal D}_1,{\cal D}_1}(Gt) \leq 4$.
\end{cor}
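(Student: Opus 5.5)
This corollary follows from Theorem~\ref{MainTriGrid} together with the monotonicity of $\chi''_{{\cal P},{\cal Q}}$ under taking subgraphs. That monotonicity is the fact from \cite{bokemi2011} already used for Corollary~\ref{MainPlanar}. The statement as written has $Gt$ in the inequality, but the intended claim is clearly that every subgraph $H$ of $Gt$ satisfies $\chi''_{{\cal D}_1,{\cal D}_1}(H)\leq 4$, and that is what I will prove.

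First I fix an optimal total colouring. By Theorem~\ref{MainTriGrid} there is a $({\cal D}_1,{\cal D}_1)$-total colouring $\varphi$ of $Gt$ with colours $\{1,2,3,4\}$; concretely, this is the colouring built from $c$, $G_1$, $G_2$, $F_1$ and $F_2$ in that proof.

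Next I restrict it. Given a subgraph $H\subseteq Gt$, let $\varphi_H$ be the restriction of $\varphi$ to $V(H)\cup E(H)$, and verify the three defining conditions one at a time.
\begin{enumerate}
\item For each colour $i$, the subgraph of $H$ induced by the vertices of colour $i$ is a subgraph of the subgraph of $Gt$ induced by those same vertices. The latter is a forest, and ${\cal D}_1$ is hereditary, so the former lies in ${\cal D}_1$.
\item The edges of $H$ of colour $i$ form a subset of the edges of $Gt$ of colour $i$. A subset of the edges of a forest again induces a forest.
\item Any vertex and edge that are incident in $H$ are also incident in $Gt$, so they already receive distinct colours under $\varphi$.
\end{enumerate}
Hence $\varphi_H$ is a $({\cal D}_1,{\cal D}_1)$-total colouring of $H$ with at most four colours, which gives $\chi''_{{\cal D}_1,{\cal D}_1}(H)\leq 4$.

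There is no real obstacle here. The only point needing care is that the vertex condition concerns \emph{induced} subgraphs. If $H$ is not an induced subgraph of $Gt$, the vertex-induced colour classes in $H$ may lack some edges that are present in $Gt$. Deleting edges cannot create a cycle, so acyclicity is preserved and the argument above is unaffected. Planarity of $H$ is automatic and plays no role in the argument.
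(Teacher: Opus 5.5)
Your proposal is correct and takes the paper's route: the paper gets this corollary, as it does Corollary~\ref{MainPlanar}, from Theorem~\ref{MainTriGrid} together with the fact from \cite{bokemi2011} that $\chi''_{\mathcal{P},\mathcal{Q}}(H)\leq k$ whenever $\chi''_{\mathcal{P},\mathcal{Q}}(G)\leq k$ and $H\subseteq G$. The only difference is that you prove this monotonicity directly, by restricting the four-colouring of $Gt$ and correctly handling non-induced subgraphs; you also rightly read the inequality as being about the subgraph $H$ rather than $Gt$.
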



\begin{thebibliography}{99}
\bibitem{be69} M. Behzad, The total chromatic number of a graph, a
  survey, Proc. Conference Combinatorial Mathematics (Oxford, England
  1969), Academic Press New York (1970).

\bibitem{bechco67} M. Behzad, G. Chartrand, J.K. Cooper Jr, The 
  numbers of complete graphs, J. London. Math. Society, 42(1967)
  226--228.

\bibitem{bobr16} M. Borowiecki and I. Broere,
Hamiltonicity and Generalised Total Colourings of planar graphs,
Discussiones Mathematicae Graph Theory 36 (2016) 243--257.

\bibitem{bobr97} M. Borowiecki, I. Broere, M. Frick, P. Mih{\'o}k and
  G. Semani{\v s}in, A survey of hereditary properties of graphs,
  Discussiones Mathematicae Graph Theory, 17 (1997) 5--50.

\bibitem{bokemi2011} M. Borowiecki, A. Kemnitz and P. Mih{\'o}k,
  Generalized total Colourings of graphs, Discussiones Mathematicae
  Graph Theory, 31 (2011) 209--222.

\bibitem{ChartLes} G. Chartrand, L. Lesniak and P. Zhang, Graphs \& Digraphs,
  fifth edition, Chapman \& Hall/CRC, 2010, ISBN 1-43982-627-7.

\bibitem{Karafova} G. Karafov\'a, Generalized Fractional Total
  Colorings of Complete Graphs, Discussiones Mathematicae Graph
  Theory, 33 (4) (2013) 665--676.

\bibitem{Kruskal} J.B. Kruskal, On the shortest spanning tree of a
  graph and the traveling salesman problem, Proc. Amer. Math. Soc., 7
  (1956) 48--50.

\bibitem{Vija71} N. Vijayaditya, On total chromatic number of a graph,
  Journal of the London Mathematical Society (2), 3 (1971), 405--408.
\end{thebibliography}
\end{document}